\newcommand*\circled[1]{\tikz[baseline=(char.base)]{
            \node[shape=circle,draw,inner sep=2pt] (char) {#1};}}
\newcolumntype{Y}{>{\centering\arraybackslash}X}
\renewcommand{\ge}{\geqslant}
\renewcommand{\geq}{\geqslant}
\renewcommand{\le}{\leqslant}
\renewcommand{\leq}{\leqslant}
\theoremstyle{plain}
\newtheorem{thm}{Theorem}[section]
\newtheorem{prop}[thm]{Proposition}
\newtheorem{cor}[thm]{Corollary}
\newtheorem{prob}[thm]{Problem}
\newtheorem{conj}[thm]{Conjecture}
\newtheorem{rem}[thm]{Remark}
\newtheorem{exmp}[thm]{Example}
\theoremstyle{definition}
\begin{document}
\author[J. Chappelon]{Jonathan Chappelon}
\address{IMAG, Univ.\ Montpellier, CNRS, Montpellier, France}
\email{\href{mailto:jonathan.chappelon@umontpellier.fr}{jonathan.chappelon@umontpellier.fr}}

\author[J.L. Ram\'irez Alfons\'in]{Jorge L. Ram\'irez Alfons\'in}
\address{IMAG, Univ.\ Montpellier, CNRS, Montpellier, France}
\email{\href{mailto:jorge.ramirez-alfonsin@umontpellier.fr}{jorge.ramirez-alfonsin@umontpellier.fr}}

\author[D.I. Stamate]{Dumitru I. Stamate}
\address{Faculty of Mathematics and Computer Science, University of Bucharest, Str. Academiei 14, Bucharest - 010014, Romania}
\email{\href{mailto:dumitru.stamate@fmi.unibuc.ro}{dumitru.stamate@fmi.unibuc.ro}}

\title{Numerical semigroup tree: type-representation}
\date{\today}
\subjclass[2010]{05A15, 05E40, 11D07}
\keywords{Numerical Semigroup, Tree}

\begin{abstract} In this paper, we introduce a new depicting of the so-called numerical semigroup tree $\mathcal T$. By exploring computationally this
improved picture, relying on the type notion of a semigroup, we found that the number of semigroups of genus $g$ and type $t$  is constant when $t$ is close to $g$ while $g$ grows. We also study the {unimodality} of various sequences as well as the behavior of the leaves in $\mathcal T$. We put forward several conjectures that are supported by various computational experiments.
\end{abstract}

\maketitle
\section{Introduction}%

A {\em numerical semigroup} $\Lambda$ is a subset of the non-negative integers $\mathbb{N}_0$, containing $0$, closed under addition and with finite complement in $\mathbb{N}_0$. 
{It is known that each numerical semigroup has a unique minimal system of generators. The cardinality of the minimal system of generators is called the {\em embedding dimension} and denoted $e(\Lambda)$.}
The elements of $\mathbb{N}_0$ that are not in $\Lambda$ are called the {\em gaps}. {We denote by $Gap(\Lambda)$ the set of gaps of $\Lambda$}. The number of gaps $g(\Lambda)$ is {called} the {\em genus} of $\Lambda$.
A relevant invariant of $\Lambda$ is  the so-called {\em Frobenius number} $F(\Lambda)$ defined as the largest gap in $\Lambda$ (we refer the reader to the book \cite{Ram} for a {thorough} discussion on the Frobenius number).
\smallskip

The {\em numerical semigroup tree}, first introduced in Rosales' thesis \cite{Ros1} (and discussed in \cite[Introduction]{Ros2}), is the infinite tree $\mathcal{T}$ whose root is the semigroup $\mathbb{N}_0$ and in
which the {\em parent} of a numerical semigroup $\Lambda$ is the numerical semigroup $\Lambda'$ obtained by adjoining to $\Lambda$ its
Frobenius number. In turn, the {\em descendants} of a numerical semigroup $\Lambda$ are the numerical semigroups obtained by taking away one by one the {minimal} generators that are larger than $F(\Lambda)$.  For instance, the descendants of $\Lambda=\langle 3,7,8\rangle$ are $\Lambda'=\langle 3,7,11\rangle$ and $\Lambda''=\langle 3,8,10\rangle$ (or equivalently, the parent of $\Lambda'$ and $\Lambda''$ is $\Lambda$). Indeed, since $7,8>F(\Lambda)=5$ are {minimal} generators of $\Lambda$ then $\Lambda'=\Lambda\setminus \{8\}$ and $\Lambda''=\Lambda\setminus \{7\}$ (or equivalently,
 $\Lambda=\Lambda'\cup {\{} F(\Lambda'){\}}=\Lambda''\cup {\{}F(\Lambda''){\}}$ with $F(\Lambda') =8$ and $F(\Lambda'') =7$).
We notice that all numerical semigroups are in $\mathcal{T}$ and the parent of a numerical semigroup of genus $g$ has genus $g-1$, see Figure \ref{fig1}. 

 \begin{figure}[ht] 
\centering
 \includegraphics[width=1\textwidth]{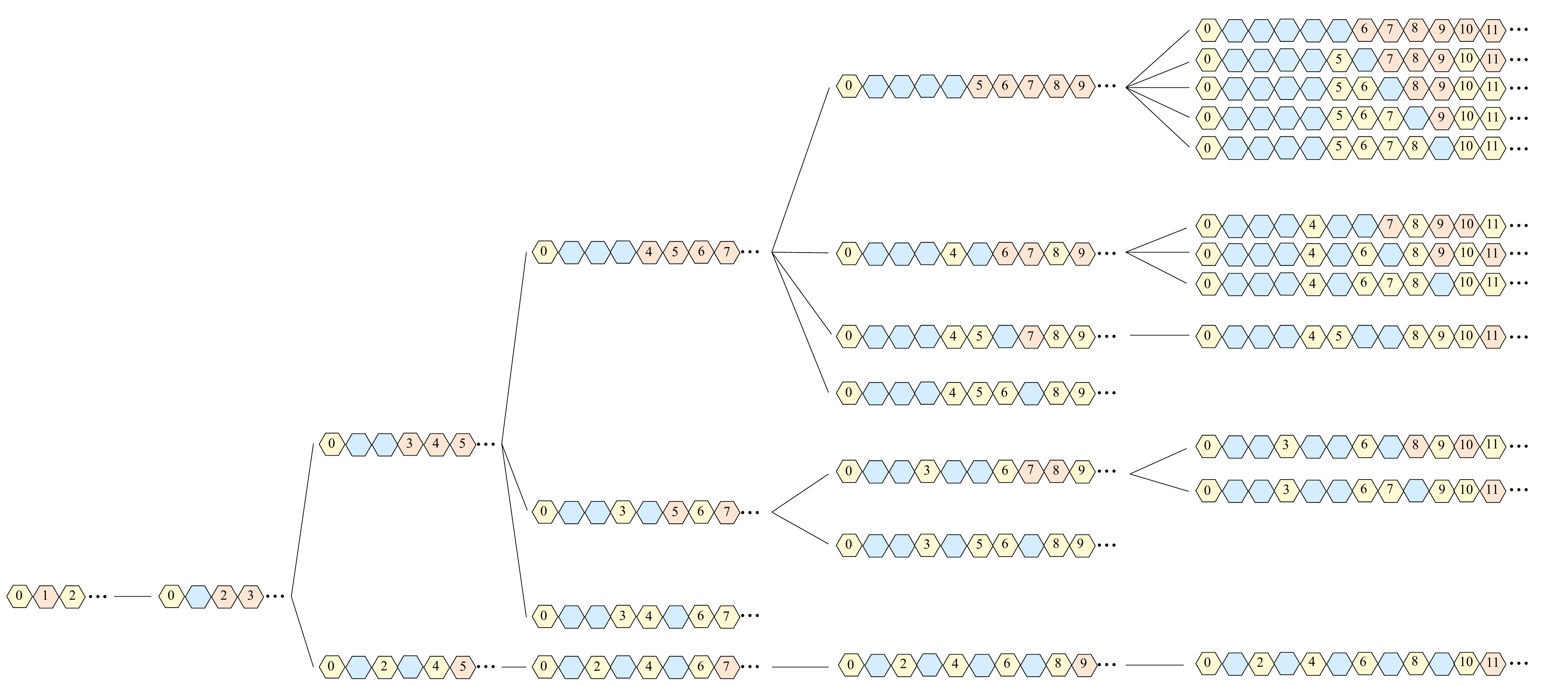}
\caption{First levels of tree $\mathcal T$ where yellow, blue and red hexagons denote elements, gaps and generators greater than the Frobenius number of the corresponding semigroup respectively.}\label{fig1}
\end{figure} 

The tree $\mathcal{T}$ has been the object of many investigations in different contexts: {Frobenius pseudo-varieties} \cite{RR}, number of numerical semigroups of given genus 
 \cite {BA,EF,FH}, Wilf's conjecture \cite{DEF, Ka1,Ka2}, etc.
\smallskip

In this paper, we introduce a new approach to study $\mathcal T$. We put forward a new way to represent $\mathcal T$ by using the notion of {\em type} of a numerical semigroup. This representation {reveals} unexpected behaviors of numerical semigroups according with both their type and genus. We hope (and expect) that this investigation leads not only to a better comprehension of $\mathcal T$ but also to open new methods to attack well-known open problems. 

The type of a numerical semigroup $\Lambda$ is defined in \cite{FGH} as the cardinality of the set of its pseudo-Frobenius numbers. The former also equals the Cohen-Macaulay type of the semigroup ring $K[\Lambda]$, where $K$ is any field. In general, it is difficult to estimate the type in terms of the minimal generators of the semigroup, see \cite{S-betti}.
 
In the next section, we introduce the type-representation depicting $\mathcal T$ and present some preliminary results on the parent-descendants types relationship. 

Let $n(g,t)$ be the number of numerical semigroups of genus $g$ and type $t$. In Section \ref{sec:stab}, we investigate a mysterious stabilizer behavior of the value $n(g,t)$ when $t$ is close to $g$ while $g$ grows, as indicated by the data in Table \ref{table1}.
 We show  that the value $n(g,t)$ {only depends on $g-t$} when $g$ is large enough and   $t\geq (2g-1)/3$ (Theorem \ref{thm:main1}). We also study the shape of the generators of these {\em stabilizers} semigroups (Theorem \ref{thm:main}, Propositions \ref{propo:1} and \ref{propo:2}) allowing us to obtain the lower bound  $n(g,g-\ell)\geq \ell^2-3\ell +10$ for $\ell\ge 6$ and $g\ge 3\ell-1$ (Theorem \ref{thm:main2}). 
One tool is to represent the gaps relevant for deciding the type of the semigroup in terms of what we call {\em gap vectors}. These are $(0/1)$-vectors of length $2(g-t)$ with equal number of $0$'s and $1$'s.  When $t\geq (2g-1)/3$ we show that the semigroup is essentially determined by its genus and the gap vector (Proposition \ref{prop:type}).

In Section \ref{sec:uni-leave}, we focus our attention to the study of the  {unimodality} property of various sequences, for instance, the one obtained by the number of semigroups with fixed genus and increasing type that are leaves in $\mathcal T$. We put forward a number of conjectures/problems supported by various experimental computations {using} the {\tt numericalsgps package} \cite{Num-semigroup} in {\tt GAP} \cite{GAP}.

\medskip
{\bf Acknowledgement}.
The third author was partially supported by the CNRS International Research Network ECO-Math.  He  thanks the University of Montpellier for the hospitality during the visits when this paper developed. {We would like to thank the referees for several helpful remarks.}
\medskip

\section{Type-representation}\label{sec:rep}

\subsection{Some preliminaries} 
The {\em type} $t(\Lambda)$ of $\Lambda$ is the cardinality of the set 
$$PF(\Lambda)=\{x \in\mathbb{Z}\setminus \Lambda \ | \ x + \ell \in \Lambda \text{ for all } \ell\in \Lambda\setminus\{0\}\}.$$ 

The elements of $PF(\Lambda)$ are called {\em pseudo-Frobenius} numbers. This concept was introduced in \cite{FGH}. Notice that $F(\Lambda)=\max\{p\ | \ p\in PF(\Lambda)\}$.
\smallskip
The partial order $\le_\Lambda$ induced by  $\Lambda$ on the integers is defined as follows: $x\le_\Lambda y$ if $y-x\in \Lambda$. It is well-known \cite{FGH} that 

1) $PF(\Lambda)= \text{Maximals}_{\le_\Lambda} (\mathbb{Z}\setminus \Lambda)$
 
2) $x\in\mathbb{Z}\setminus \Lambda$ if and only if $f-x\in \Lambda$ for some $f\in PF(\Lambda)$.

The {\em multiplicity} of $\Lambda$, denoted by $m(\Lambda)$, is the least positive element in $\Lambda$. 

If $\Lambda$ is a numerical semigroup with positive genus, that is, $\Lambda\subsetneq \mathbb{N}$, then $\Lambda\cup \{F(\Lambda)\}$ is also a numerical semigroup whose embedding dimension is {at most} one more that of $\Lambda$. Also, 
$$
m(\Lambda\cup\{F(\Lambda)\}) = 
\begin{cases} m(\Lambda)-1  &\text{ if }t(\Lambda)=g(\Lambda), \\   m(\Lambda) &\text{ otherwise}.
\end{cases}
$$
We have, by construction, that $g(\Lambda)>g(\Lambda\cup F(\Lambda))=g(\Lambda)-1$.

\subsection{New depicting of $\mathcal T$} 
Let $\Lambda$  be a numerical semigroup {in which} $x$ and $y$ are two minimal generators with $y,x>F(\Lambda)$.
{Often}, when depicting the tree $\mathcal T$, if $\Lambda'=\Lambda\setminus \{x\}, \Lambda''=\Lambda\setminus \{y\}$ are descendants of $\Lambda$ 
then $\Lambda'$ comes before $\Lambda''$ if $x<y$. 
\smallskip

We propose a different depicting of $\mathcal T$ according with the types.
The {\em type-representation} of $\mathcal T$ is given by ordering each level of $\mathcal T$ by increasing type order; see Figure \ref{fig2}. 

 \begin{figure}[ht] 
\centering
\includegraphics[width=1\textwidth]{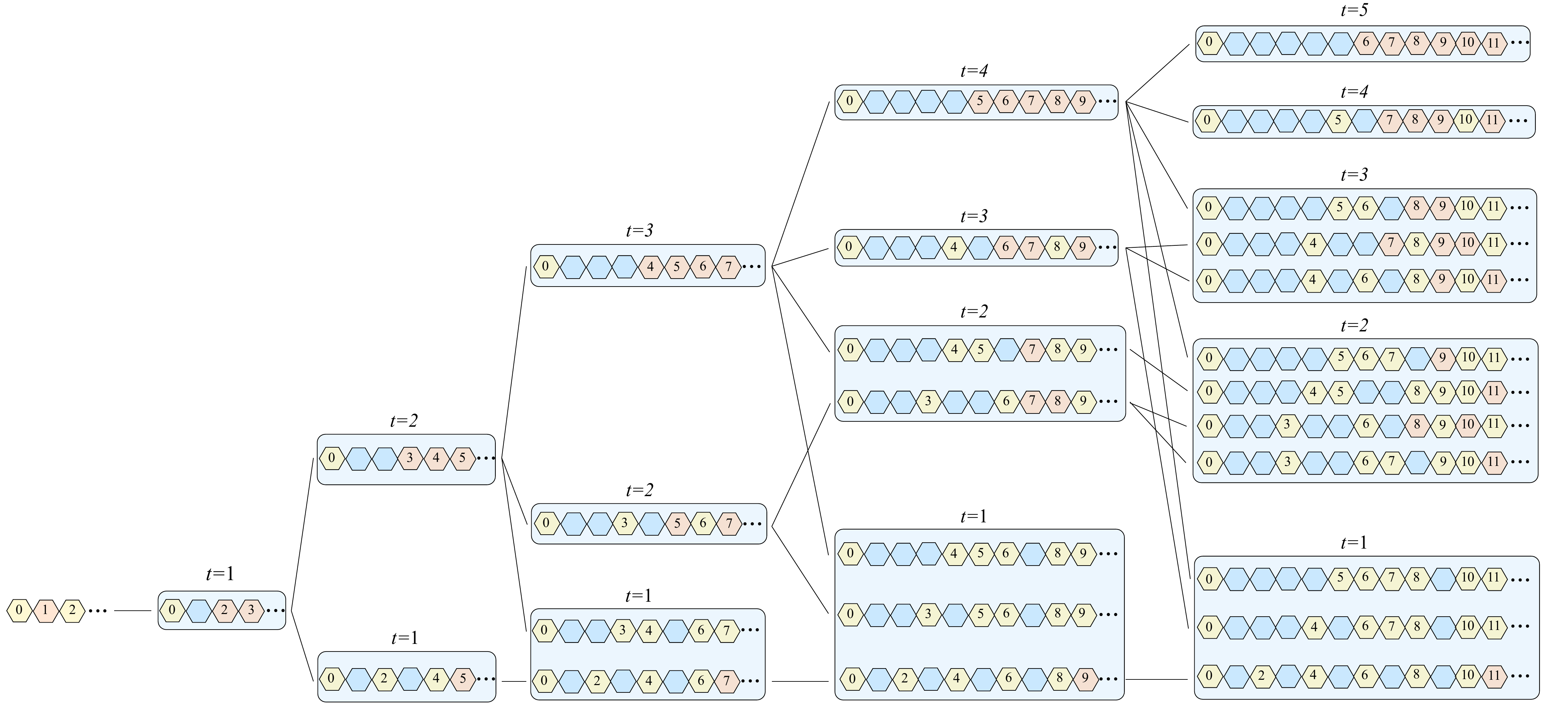}
\caption{First levels of the type-representation of $\mathcal T$. In each level, semigroups of the same type $t$ are grouped together in a box.}\label{fig2}
\end{figure}



\subsection{Parent-descendants types} We present a few results relating $t(\Lambda)$ and $t(\Lambda')$ where $\Lambda'$ is a descendant of $\Lambda$. We have the following

\begin{prop}\label{prop11} Let $\Lambda'$ be a numerical semigroup of genus $g>0$. Let $\Lambda=\Lambda'\cup\{F(\Lambda')\}$. Then,
\begin{enumerate}
\item[(i)] $PF(\Lambda')\setminus \{F(\Lambda')\} \subseteq PF(\Lambda)$,  with equality if and only if $t(\Lambda')=g(\Lambda')$.
\item[(ii)] $t(\Lambda)\geq t(\Lambda')-1$, with equality if and only if $t(\Lambda')=g(\Lambda')$.
\item[(iii)] If $m(\Lambda')<F(\Lambda')$ then $t(\Lambda) \geq t(\Lambda')$.
\end{enumerate}
\end{prop}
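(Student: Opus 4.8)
The plan is to establish the containment in (i) first, read off the inequality of (ii) directly from it, and then reduce both equality statements and part (iii) to a single analysis of the equality case. Throughout I write $F=F(\Lambda')$, $m=m(\Lambda')$, $g=g(\Lambda')$ and $t=t(\Lambda')$, and I record at the outset the standard fact that every pseudo-Frobenius number is a positive gap (a negative gap $x$ is not $\le_\Lambda$-maximal, since $F-x>F$ lies in $\Lambda$ and hence $x\le_\Lambda F$ with $F$ another gap), so that $PF(\Lambda')\subseteq\{1,\dots,F\}$.

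For the containment, I would take $x\in PF(\Lambda')\setminus\{F\}$, which is a gap of $\Lambda'$ distinct from $F$ and hence a gap of $\Lambda=\Lambda'\cup\{F\}$, and verify the defining condition over $\Lambda\setminus\{0\}$. If $\ell\in\Lambda'\setminus\{0\}$ then $x+\ell\in\Lambda'\subseteq\Lambda$ because $x\in PF(\Lambda')$; if $\ell=F$ then $x+F>F$ since $x\ge1$, so $x+F\in\Lambda'\subseteq\Lambda$. Thus $x\in PF(\Lambda)$ and $PF(\Lambda')\setminus\{F\}\subseteq PF(\Lambda)$. Since $F=\max PF(\Lambda')$ lies in $PF(\Lambda')$, the left-hand set has $t-1$ elements, so comparing cardinalities gives at once $t(\Lambda)\ge t-1$, the inequality of (ii). Because equality in (ii) is equivalent to the two sets of (i) coinciding, everything now reduces to deciding when the containment is strict.

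The heart of the matter, and the step I expect to be the main obstacle, is the equality case. I would first record the dichotomy (a quick consequence of the definitions) that $t=g$ holds precisely when $\Lambda'$ is the ordinary semigroup $\{0,m,m+1,\dots\}$, equivalently when $F=m-1$, while in every other case $F>m$. When $\Lambda'$ is ordinary a direct computation gives $PF(\Lambda')=\{1,\dots,m-1\}$ and $PF(\Lambda)=\{1,\dots,m-2\}$, so the sets coincide and equality holds. When $\Lambda'$ is not ordinary we have $F>m$, so $x:=F-m$ is a positive integer; it is a gap of $\Lambda'$ (else $F=x+m\in\Lambda'$), it differs from $F$, and $x\notin PF(\Lambda')$ because $x+m=F\notin\Lambda'$. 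The key claim is that nonetheless $x\in PF(\Lambda)$: indeed $x+m=F\in\Lambda$, while for every other $\ell\in\Lambda\setminus\{0\}$ one has $\ell>m$ or $\ell=F$, whence $x+\ell>F$ and $x+\ell\in\Lambda'$. Therefore $x$ is a pseudo-Frobenius number of $\Lambda$ lying outside $PF(\Lambda')\setminus\{F\}$, the containment is strict, and equality in (i) (equivalently in (ii)) holds exactly when $t=g$.

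Finally, part (iii) follows with no further work: the hypothesis $m(\Lambda')<F(\Lambda')$ is exactly $m<F$, which by the dichotomy means $\Lambda'$ is not ordinary, i.e.\ $t<g$. Hence equality in (ii) is impossible, so $t(\Lambda)>t-1$, and as the types are integers this forces $t(\Lambda)\ge t=t(\Lambda')$. The only delicate points in the whole argument are the positivity of pseudo-Frobenius numbers used for the containment and the case-checking that exhibits $F-m$ as a genuine new pseudo-Frobenius number of $\Lambda$; once the element $F-m$ is guessed, the verification is routine.
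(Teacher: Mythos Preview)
Your proof is correct and follows essentially the same route as the paper: you establish the containment by checking the pseudo-Frobenius condition directly on $\Lambda\setminus\{0\}=(\Lambda'\setminus\{0\})\cup\{F\}$, handle the ordinary case $t=g$ by explicit computation, and in the non-ordinary case exhibit the very same witness $F-m\in PF(\Lambda)\setminus PF(\Lambda')$ to force strict containment, deducing (ii) and (iii) from (i) exactly as the paper does (the paper invokes its Proposition~\ref{gist} for the dichotomy you record). The only cosmetic difference is that you prove the containment once up front, whereas the paper embeds it inside the case split.
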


We first notice that, by definition, $PF(\Lambda) \subseteq Gaps(\Lambda)$ and thus obtaining the obvious bound $t(\Lambda)\leq g(\Lambda)$. The equality can be characterized as follows.

\begin{prop}\label{gist} Let $\Lambda$ be a numerical semigroup. Then, the following statements are equivalent.
\begin{enumerate}
\item[(i)] $t(\Lambda)=g(\Lambda)$.
\item[(ii)] $m(\Lambda)=F(\Lambda)+1$.
\item[(iii)] $F(\Lambda)<m(\Lambda)$.
\item[(iv)] $\Lambda=\langle c, c+1,\dots, 2c-1\rangle$ for some integer $c\geq 1$. 
\end{enumerate}
If these hold then $g(\Lambda)=t(\Lambda)=F(\Lambda)=c-1$. 
\end{prop}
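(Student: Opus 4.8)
The plan is to establish the cycle of implications $(i)\Rightarrow(iii)\Rightarrow(ii)\Rightarrow(iv)\Rightarrow(i)$ and then read off the final numerical identities. Throughout I rely on the observation already recorded above that $PF(\Lambda)\subseteq \mathbb{Z}\setminus\Lambda$ consists of gaps, so that $t(\Lambda)\le g(\Lambda)$, with equality exactly when \emph{every} gap is a pseudo-Frobenius number. I may assume $g(\Lambda)\ge 1$, so that $1$ is a gap and $m(\Lambda)\ge 2$; the root $\Lambda=\mathbb{N}_0$ (genus $0$) is a boundary case that I treat separately.

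The heart of the argument is $(i)\Rightarrow(iii)$, which I would prove by contradiction. Suppose $t(\Lambda)=g(\Lambda)$, so that $PF(\Lambda)$ coincides with the full set of gaps, and suppose toward a contradiction that $F(\Lambda)>m(\Lambda)$ (note $F(\Lambda)\ne m(\Lambda)$, since $m(\Lambda)\in\Lambda$ while $F(\Lambda)\notin\Lambda$). Put $x=F(\Lambda)-m(\Lambda)$, a positive integer with $x<F(\Lambda)$. Then $x$ is a gap, for if $x\in\Lambda$ then $F(\Lambda)=x+m(\Lambda)\in\Lambda$, which is false. On the other hand $x+m(\Lambda)=F(\Lambda)\notin\Lambda$ with $m(\Lambda)\in\Lambda\setminus\{0\}$, so $x$ fails the defining condition for $PF(\Lambda)$; hence $x$ is a gap that is not pseudo-Frobenius, contradicting $t(\Lambda)=g(\Lambda)$. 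Therefore $F(\Lambda)<m(\Lambda)$.

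The remaining implications merely unwind the definitions. For $(iii)\Rightarrow(ii)$: from $F(\Lambda)<m(\Lambda)$ we get $m(\Lambda)\ge F(\Lambda)+1$, while $F(\Lambda)+1\in\Lambda$ (it exceeds the largest gap) and $m(\Lambda)$ is the least positive element, giving $m(\Lambda)\le F(\Lambda)+1$; hence $m(\Lambda)=F(\Lambda)+1$. For $(ii)\Rightarrow(iv)$: writing $c=m(\Lambda)=F(\Lambda)+1$, every positive integer below $c$ is a gap and every integer at least $c$ lies in $\Lambda$, so $\Lambda=\{0\}\cup\{n:n\ge c\}$; since any integer at least $2c$ is a sum of two elements that are at least $c$, the minimal generating set is exactly $\{c,c+1,\dots,2c-1\}$, i.e.\ $\Lambda=\langle c,c+1,\dots,2c-1\rangle$. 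For $(iv)\Rightarrow(i)$: for such $\Lambda$ the gaps are $1,\dots,c-1$, and given any gap $x$ and any $\ell\in\Lambda\setminus\{0\}$ one has $\ell\ge c$, so $x+\ell\ge 1+c>c-1=F(\Lambda)$ and thus $x+\ell\in\Lambda$; therefore every gap lies in $PF(\Lambda)$ and $t(\Lambda)=g(\Lambda)$.

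Finally, once $(iv)$ holds the numerical identities are read off directly: the gaps being $1,\dots,c-1$ give $g(\Lambda)=c-1$ and $F(\Lambda)=c-1$, while the computation just made shows $PF(\Lambda)=\{1,\dots,c-1\}$, so $t(\Lambda)=c-1$ too. I expect the only genuinely non-mechanical point to be the implication $(i)\Rightarrow(iii)$, and specifically the choice of the witness gap $x=F(\Lambda)-m(\Lambda)$: this is what converts the hypothesis $t(\Lambda)=g(\Lambda)$ into a comparison between $F(\Lambda)$ and $m(\Lambda)$. Every other step is a routine translation between the notions of multiplicity, Frobenius number, and pseudo-Frobenius number.
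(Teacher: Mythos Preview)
Your proof is correct and essentially parallels the paper's argument: both reduce the nontrivial content to showing that $t(\Lambda)=g(\Lambda)$ forces the gaps to be precisely $1,\dots,m(\Lambda)-1$, and then cycle through the remaining implications mechanically. The only cosmetic difference is that the paper argues $(i)\Rightarrow(ii)$ directly (every gap $x$ being pseudo-Frobenius forces $x+m(\Lambda)\in\Lambda$, so the gaps $1,\dots,m(\Lambda)-1$ already exhaust $Gaps(\Lambda)$), whereas you prove $(i)\Rightarrow(iii)$ by contradiction via the single witness $x=F(\Lambda)-m(\Lambda)$; this is the same witness the paper itself uses a few lines later in the proof of Proposition~\ref{prop11}, so the two arguments are really the same idea viewed from opposite ends.
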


\begin{proof} The equality $t(\Lambda)=g(\Lambda)$ means that all gaps in $\Lambda$ are  pseudo-Frobenius numbers, that is, these are not comparable with respect to the $\leq_\Lambda$ partial order. In particular, for any gap $x<m(\Lambda)$ we get that $x+m(\Lambda)$ is not a gap. Thus $(i)$ is equivalent to the fact that $1,\dots, m(\Lambda)-1$ are all the gaps in  $\Lambda$, which implies $(ii)$. Clearly, $(ii) \implies (iii)\implies  (iv) \implies (i)$, and from the latter we get that $c-1=t(\Lambda)=g(\Lambda)=F(\Lambda)$.
\end{proof}

{\em Proof of Proposition \ref{prop11}}.
For $(i)$ we distinguish two cases. First, assume $t(\Lambda')=g(\Lambda')$. By Proposition \ref{gist} we get $\Lambda'=\langle c, c+1,\dots, 2c-1\rangle$ with $c=g(\Lambda')+1=F(\Lambda')+1$, and $\Lambda=\langle c-1, c,\dots, 2c-3\rangle$. Hence, $PF(\Lambda')\setminus\{F(\Lambda')\}=\{1,\dots, c-2\}= PF(\Lambda)$.

Now, assume $t(\Lambda')<g(\Lambda')$, that is, $m(\Lambda')<F(\Lambda')$ by Proposition \ref{gist}.   We verify that $F(\Lambda')-m(\Lambda')\in PF(\Lambda)\setminus PF(\Lambda')$. Indeed, $F(\Lambda')-m(\Lambda')<_{\Lambda'} F(\Lambda') \in PF(\Lambda')$, hence $F(\Lambda')-m(\Lambda')\notin PF(\Lambda')$. Also, for any $0\neq h\in \Lambda$ we have $F(\Lambda')-m(\Lambda')+h\geq F(\Lambda')$, thus $F(\Lambda')-m(\Lambda')\in PF(\Lambda)$. 

Let $a\in PF(\Lambda')\setminus \{F(\Lambda')\}$. Therefore, $a\notin \Lambda'\cup \{F(\Lambda')\}$ and $a+(\Lambda'\setminus\{0\}) \subset \Lambda'$. As $a+F(\Lambda')\geq F(\Lambda')+1$, we obtain $a+F(\Lambda')\in \Lambda$. This implies that $a+ (\Lambda\setminus\{0\})\subset \Lambda$, that is, $a\in PF(\Lambda)$.
Consequently, $PF(\Lambda')\setminus\{F(\Lambda')\} \subsetneq PF(\Lambda)$. This finishes the proof of part $(i)$.

Part  $(ii)$ follows at once from $(i)$, and also part $(iii)$ by using Proposition \ref{gist}.
\hfill$\square$

\section{Stabilizer sequence}\label{sec:stab}

Let $n(g,t)$ be the number of numerical semigroups of genus $g$ and type $t$. In Table \ref{table1} are computed with GAP (\cite{GAP}, \cite{Num-semigroup}) the values of $n(g,t)$ for each $1\le t\le g\le 33$.
We notice that the right diagonals (bold numbers) in Table \ref{table1} become constant at certain threshold. These values are given in Table \ref{table:2}.

\begin{table}[htbp!]
\centering
\begin{tabular}{c|c|c|c|c|c|c|c|c|c|c|c}
$g\ge$  & 2 &  5 &  8 &  11 &  14 &  17 &20 & 23&  26 & 29 &  32 \\  
\hline
$t$ & $g-1$ & $g-2$ &$g-3$ &$g-4$ &$g-5$ &$g-6$ &$g-7$ &$g-8$ &$g-9$ &$g-10$ &$g-11$\\
\hline
$n(g,t)$ &1 & 3 & 7 & 15 & 35 & 78 & 161 & 367 & 757 & 1632 & 3436\\
\end{tabular}
\caption{Some values of $n(g,t)$.}
\label{table:2}
\end{table} 


The values in Table \ref{table:2} suggest that for $g$ large enough and $t$ close to $g$ then $n(g,t)$ remains constant.
Before establishing the latter formally  in Theorem \ref{thm:stabilizer} below, we discuss some connections between the type and genus of a numerical semigroup.
We first show the following inequality. 
{This is known \cite[Proposition 2.2]{Nari}; also see \cite{DGR}. Nevertheless, we include a proof for completeness.}

\begin{prop}\label{fgt-bound}
For any numerical semigroup $\Lambda$ we have
$$
F(\Lambda)+t(\Lambda)\leq 2g(\Lambda).
$$
\end{prop}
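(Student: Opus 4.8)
The plan is to rewrite the desired inequality as $t(\Lambda)\le 2g(\Lambda)-F(\Lambda)$ and to read the right-hand side off the reflection $x\mapsto F(\Lambda)-x$ acting on $\{0,1,\dots,F(\Lambda)\}$. Throughout write $F=F(\Lambda)$, $g=g(\Lambda)$, $t=t(\Lambda)$; I may assume $g>0$, so that $F\ge 1$, the case $\Lambda=\mathbb{N}_0$ being trivial. All $g$ gaps lie in $\{1,\dots,F\}$, and since $F$ is itself a gap this interval contains exactly $F-g$ nonzero elements of $\Lambda$.

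The first step is to pair the small elements with certain gaps. If $s\in\Lambda$ with $1\le s\le F-1$, then $F-s$ must be a gap, for otherwise $s+(F-s)=F\in\Lambda$ would contradict that $F$ is a gap. Calling a gap $x$ \emph{good} when $F-x\in\Lambda$, this shows that $x\mapsto F-x$ restricts to a bijection between the good gaps and the set $\Lambda\cap\{0,1,\dots,F-1\}$, the element $0$ corresponding to the good gap $F$. Hence the number of good gaps equals $(F-g)+1$, and so the number of remaining gaps --- those $x$ for which $F-x$ is \emph{also} a gap --- equals $g-(F-g+1)=2g-F-1$.

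The second, and key, step concerns the pseudo-Frobenius numbers: for every $f\in PF(\Lambda)$ with $f\ne F$ I claim $F-f$ is again a gap. Indeed $0<F-f$, and if $F-f$ were a (necessarily nonzero) element of $\Lambda$, then the defining property of $PF(\Lambda)$, applied with $\ell=F-f$, would force $f+(F-f)=F\in\Lambda$, a contradiction. Thus each of the $t-1$ pseudo-Frobenius numbers different from $F$ is one of the non-good gaps counted above.

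Combining the two counts yields $t-1\le 2g-F-1$, that is, $F+t\le 2g$, as required. The only step carrying genuine content is the second one; once it is in place the result is pure bookkeeping on the fibers of the reflection. The mild subtlety to watch is the treatment of the endpoints $0$ and $F$ (and of the self-paired gap $F/2$ when $F$ is even), but these are absorbed uniformly by the good / non-good dichotomy and do not disturb the counts.
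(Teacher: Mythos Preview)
Your proof is correct and uses essentially the same idea as the paper: the reflection $x\mapsto F-x$, together with the observation that both nonzero small elements of $\Lambda$ and the pseudo-Frobenius numbers $\neq F$ are sent to gaps under it. The only difference is organizational: the paper bundles $(PF(\Lambda)\setminus\{F\})\cup A$ (with $A=\Lambda\cap[1,F]$) into a single injective map into $Gaps(\Lambda)\setminus\{F\}$ and counts once, whereas you first biject $\Lambda\cap\{0,\dots,F-1\}$ with the ``good'' gaps and then locate $PF(\Lambda)\setminus\{F\}$ among the remaining ``non-good'' gaps; the arithmetic is the same either way.
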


\begin{proof}
Set $g=g(\Lambda)$ and $t=t(\Lambda)$. We denote $A=\Lambda\cap [1,\dots, F(\Lambda)]$ and $B=Gaps(\Lambda)\setminus PF(\Lambda)$.  Clearly,  $|B|=g-t$, and since $PF(\Lambda)\cup A\cup B$ is a partition of $[1,\dots, F(\Lambda)]$ it follows that $|A|=F(\Lambda)-g$.
The map
$$
\varphi :(PF(\Lambda)\setminus\{F(\Lambda)\})\cup A \to Gaps(\Lambda)\setminus\{F(\Lambda)\}
$$
given by $\varphi(x)=F(\Lambda)-x$ is well defined and injective, hence 
$$|(PF(\Lambda)\setminus\{F(\Lambda)\})\cup A | \leq |Gaps(\Lambda)\setminus\{F(\Lambda)\}|,
$$ 
that is $(t-1)+(F(\Lambda)-g)\leq g-1$, leading to the desired inequality.
\end{proof}



Let $\Lambda$ be a numerical semigroup with genus $g(\Lambda)>0$ and type $t(\Lambda)=g(\Lambda)-\ell$, for $0\leq \ell\leq g-1$. 
The case $\ell=0$ is fully understood by Proposition \ref{gist}. 

Assume now $\ell>0$. 
Since $m(\Lambda)\geq t(\Lambda)+1$ (cf. \cite[Corollary 2.23]{RG}) and, by Proposition \ref{fgt-bound},  we have 
$$F(\Lambda)\leq 2g(\Lambda)-t(\Lambda)=g(\Lambda)+\ell,$$ we obtain  
$$
\{1,\dots, g-\ell\} \subseteq Gaps(S) \subseteq \{1,\dots, g+\ell\}.
$$ 

Motivated by these inclusions, we may attach to $\Lambda$ the $(0/1)$-vector  $v(\Lambda)=(v_1,\dots, v_{2\ell})$, where
$$v_i=
\begin{cases} 1 \text{ if } g-\ell+i \in Gaps(\Lambda), \\
0 \text{ otherwise}.
\end{cases}
$$
It is clear that $\ell$ of the entries in $v(\Lambda)$ are equal $1$, and the rest of $\ell$ entries are $0$.
\smallskip

We call $v(\Lambda)$ the {\em  gap vector} of $\Lambda$ {(a similar binary encoding was used in \cite{AF})}.

For any $(0/1)$-vector $v=(v_1,\dots, v_{2\ell})$ and any integer $g> \ell$  we denote
\begin{eqnarray*}
\mathcal{G}_{g,v} &=& \{1,\dots, g-\ell\}\cup\{g-\ell+i: 1\leq i \leq 2\ell \text{  and } v_i=1\}, \\
\Lambda_{g,v} &=& \mathbb{N}_0\setminus \mathcal{G}_{g,v}.
\end{eqnarray*}

The semigroup $\Lambda$ and its gaps  
can be recovered from $v(\Lambda)$ and $g(\Lambda)$ since 
$$Gaps(\Lambda)= \mathcal{G}_{g(\Lambda),v(\Lambda)} \text{  and  } \Lambda=\Lambda_{g(\Lambda), v(\Lambda)}.$$

\begin{rem} {\em 
(a) It is not always true that  the set $\mathcal{G}_{g, v}$ is the set of gaps of a numerical semigroup. Indeed, for $v=(0,1,\dots, 0, 1)\in \mathbb{Z}^{2\ell}$ and $g=3\ell-2$ the set
$$
\mathcal{G}_{3\ell-2, v}=\{1,\dots, g-\ell=2\ell-2\}\cup\{2\ell, 2\ell+2, 2 \ell+4, \dots, g+\ell=4\ell-2\}
$$
is not the set of gaps of a numerical semigroup since $g-\ell+1=2\ell-1 \notin \mathcal{G}_{3\ell-2, v}$, yet $2(2\ell-1)=4\ell -2 \in \mathcal{G}_{3\ell-2, v}$.
\smallskip

b) {Even if}  $\mathcal{G}_{g,v}$ {were} the set of gaps of a numerical semigroup, its type {could} be different from $\ell$. Indeed, for the vector $v=(1,\dots, 1, 0,\dots,0)$ with $\ell$ $1$'s and $\ell$ $0$'s, and any $g>\ell>1$ one has 
$$
\mathcal{G}_{g, v}=\{1,2,\dots, g\},
$$
and its complement $\Lambda_{g, v}=\{0, g+1, g+2, \dots\}$ is a numerical semigroup of type equal to $g$, by Proposition \ref{prop11}. }
\end{rem}


\begin{prop} \label{prop:type} Let $\ell\ge 1$ and $g\geq 3\ell -1$ be positive integers and let $v=(v_1,\dots, v_{2\ell})$ be any vector where half of the entries are $0$ and the other half are $1$. Then,
\begin{enumerate}
\item[(i)] $\Lambda_{g,v}$ is a numerical semigroup of genus $g$.
\item[(ii)] $Gaps(\Lambda_{g,v}) \setminus PF(\Lambda_{g,v})=\{  j-i: 1\leq i<j\leq 2\ell, v_i=0, v_j=1\}$.
\item[(iii)]  $t(\Lambda_{g,v})=g-|\{  j-i: 1\leq i<j\leq 2\ell, v_i=0, v_j=1\}|$.
\end{enumerate}
\end{prop}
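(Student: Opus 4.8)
The plan is to handle all three parts in essentially one pass, since the hypothesis $g\ge 3\ell-1$ is exactly the numerical margin that makes everything work. Throughout I would keep in mind the partition of $\{1,\dots,g+\ell\}$ into the ``forced gaps'' $\{1,\dots,g-\ell\}$ and the \emph{window} $\{g-\ell+1,\dots,g+\ell\}$, which I index by $i\mapsto g-\ell+i$ for $1\le i\le 2\ell$, so that $g-\ell+i$ is a gap iff $v_i=1$ and lies in $\Lambda_{g,v}$ iff $v_i=0$.

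For $(i)$, the finiteness of the complement and $0\in\Lambda_{g,v}$ are immediate, and the genus count is bookkeeping: $\mathcal{G}_{g,v}$ consists of the $g-\ell$ forced gaps together with exactly $\ell$ window integers (one per index with $v_i=1$), giving $g$ gaps. The only real content is closure under addition. Since $\{1,\dots,g-\ell\}\subseteq\mathcal{G}_{g,v}$, every positive element of $\Lambda_{g,v}$ is at least $g-\ell+1$, so the sum of any two positive elements is at least $2(g-\ell+1)=2g-2\ell+2$. The inequality $g\ge 3\ell-1$ is precisely what yields $2g-2\ell+2>g+\ell$, and since every gap is $\le g+\ell$, such a sum must lie in $\Lambda_{g,v}$. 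This is where the hypothesis enters and is the crux of $(i)$; Remark (a) shows it cannot be relaxed.

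For $(ii)$, recall from the text that $PF(\Lambda_{g,v})\subseteq Gaps(\Lambda_{g,v})$, so I may restrict to positive gaps $x$, for which $x\notin PF(\Lambda_{g,v})$ means exactly that $x+s$ is again a gap for some $s\in\Lambda_{g,v}\setminus\{0\}$. The key step is to localize $s$ and $x+s$: since $s\ge g-\ell+1$ and $x\ge 1$, we get $x+s>g-\ell$, so the gap $x+s$ must be a window gap, say $x+s=g-\ell+j$ with $v_j=1$; and $x+s\le g+\ell$ forces $s\le g+\ell-1$, so $s$ itself lies in the window, say $s=g-\ell+i$ with $v_i=0$. Subtracting gives $x=j-i$ with $1\le i<j\le 2\ell$, $v_i=0$, $v_j=1$. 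Conversely, for any such pair the value $x=j-i$ satisfies $1\le x\le 2\ell-1\le g-\ell$, hence is a (forced) gap, and $s=g-\ell+i$ witnesses that it is not pseudo-Frobenius. This double inclusion gives the claimed set equality.

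Part $(iii)$ is then pure counting: $PF(\Lambda_{g,v})$ and $Gaps(\Lambda_{g,v})\setminus PF(\Lambda_{g,v})$ partition the $g$ gaps, so $t(\Lambda_{g,v})=|PF(\Lambda_{g,v})|=g-|Gaps(\Lambda_{g,v})\setminus PF(\Lambda_{g,v})|$, and substituting $(ii)$ finishes. I expect the only delicate point to be the window localization in $(ii)$ — ruling out any witness $s$ or sum $x+s$ lying outside the window, and confirming that every difference $j-i$ lands in $\{1,\dots,g-\ell\}$ — but both rely on the same margin $g\ge 3\ell-1$ already exploited in $(i)$, so once that is set up the argument should close cleanly.
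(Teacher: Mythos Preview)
Your proposal is correct and follows essentially the same route as the paper: for $(i)$ both use that any sum of two positive elements is at least $2(g-\ell+1)\ge g+\ell+1$, and for $(ii)$ both localize the witness $s$ and the sum $x+s$ to the window $\{g-\ell+1,\dots,g+\ell\}$ and read off $x=j-i$. If anything, your converse in $(ii)$ is a bit more explicit than the paper's, since you verify that $j-i\le 2\ell-1\le g-\ell$ lands in the forced-gap range before exhibiting the witness.
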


\begin{proof} We write $\Lambda=\Lambda_{g,v}$, for short.  

$(i)$ We clearly have that $2\min(\Lambda\setminus \{0\}) \geq 2(g-\ell+1)$ and $\max \mathcal{G}_{g,v}\leq g+\ell$.
The hypothesis $g\geq 3\ell-1$ gives $2(g-\ell+1)\geq g+\ell+1$, and therefore $\Lambda$ is a numerical semigroup with $Gaps(\Lambda)=\mathcal{G}_{g,v}$ and genus $g(\Lambda)=g$.

$(ii)$ Let $a\in Gaps(\Lambda)\setminus PF(\Lambda)$. Then there exists $b\in Gaps(\Lambda)$ and $0\neq \lambda \in \Lambda$ with $a+\lambda=b$. Since $g+\ell \geq b > \lambda  \geq m(\Lambda)\geq g-\ell+1$,  
we may write $b=g-\ell+j$, $ \lambda=g-\ell+i$ with $1\leq i<j\leq 2\ell$, $v_j=1$ and $v_i=0$. Therefore, $a=b-\lambda=j-i$.

Conversely, if $a=j-i$ with $1\leq i <j\leq 2\ell$ such that $v_j=1$ and $v_i=0$, we may write $a=(g+\ell+j)-(g+\ell+i)$ with $g+\ell+j\in Gaps(\Lambda)$ and $g+\ell+i\in \Lambda$.
\smallskip

We clearly have that $(iii)$ is a straight consequence of $(ii)$.
\end{proof}

The previous result motivates the following definition. For any $(0/1)$-vector $v=(v_1,\dots, v_{2\ell})$, its {\em cotype} is the integer
$$
cotype(v)=|\{ j-i:1\leq i<j\leq 2\ell \text{ with } v_i=0 \text{ and } v_j=1\}|.
$$

Under the assumptions of Proposition \ref{prop:type}, we have
\begin{equation} \label{eq:t}
type(\Lambda_{g,v})= g-cotype(v).
\end{equation}

\begin{exmp}  \label{ex:cotype} The following vectors have $\ell \geq 1$ entries equal to $1$ and $\ell$ entries equal to $0$.
\begin{enumerate}
\item[(i)] $cotype(1,\dots, 1, 0,\dots, 0) =|\emptyset|=0$.
\item[(ii)] $cotype(0,\dots, 0, 1, \dots, 1)=|\{1,\dots, \ell, \ell+1,\dots, 2\ell -1\}|=2\ell-1$.
\item[(iii)] $cotype(0,1,0,1, \dots, 0, 1)=|\{1,3,\dots, 2\ell-1\}|=\ell$.
\end{enumerate}
\end{exmp}

\begin{thm}\label{thm:main} Let $g\geq t>0$ be integers with $t\geq (2g-1)/3$ (or equivalently $g\leq (3t+1)/2$).
\begin{enumerate}
\item[(a)] If  $\Lambda$ is any numerical semigroup of genus $g$ and type $t$, then
$$
\Gamma=\{0\}\cup \{x+1: 0\neq x\in \Lambda\} 
$$
is a numerical semigroup of genus $g+1$ and type $t+1$.
\item[(b)] If $\Gamma$ is any numerical semigroup of genus $g+1$ and type $t+1$, then
$$
\Lambda=\{0\} \cup \{x-1: 0\neq x \in \Gamma\}
$$
is a numerical semigroup of genus $g$ and type $t$.
\end{enumerate}
\end{thm}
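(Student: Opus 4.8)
The plan is to recast the shift operation in the language of the gap-vector encoding from Proposition \ref{prop:type}, under which it becomes transparent. First I would set $\ell = g - t$ and record that the hypothesis $t \geq (2g-1)/3$ is exactly $g \geq 3\ell - 1$: clearing denominators, $3t \geq 2g-1$ reads $3(g-\ell) \geq 2g-1$, i.e. $g \geq 3\ell - 1$. The degenerate case $\ell = 0$ (that is, $t = g$) is immediate from Proposition \ref{gist}: there $\Lambda = \langle c,\dots,2c-1\rangle$ with $c = g+1$, so $\Gamma = \{0\}\cup\{c+1,c+2,\dots\} = \langle c+1,\dots,2c+1\rangle$ has genus $g+1$ and, again by Proposition \ref{gist}, type $g+1 = t+1$. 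So assume $\ell \geq 1$. Since $\Lambda$ has genus $g$ and type $t = g-\ell$, the inclusions $\{1,\dots,g-\ell\}\subseteq Gaps(\Lambda)\subseteq\{1,\dots,g+\ell\}$ (coming from $m(\Lambda)\geq t+1$ and Proposition \ref{fgt-bound}) put $\Lambda$ into gap-vector form: $\Lambda = \Lambda_{g,v}$ for $v = v(\Lambda)\in\{0,1\}^{2\ell}$, and by \eqref{eq:t} one has $cotype(v) = g - t = \ell$.

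The heart of the matter is one bookkeeping computation. For $y \geq 2$ we have $y \in \Gamma \iff y-1 \in \Lambda$, while $1 \notin \Gamma$; hence $Gaps(\Gamma) = \{1\}\cup\{x+1 : x\in Gaps(\Lambda)\}$. Substituting $Gaps(\Lambda) = \mathcal{G}_{g,v}$ and simplifying yields precisely $\mathcal{G}_{g+1,v}$, so $\Gamma = \Lambda_{g+1,v}$: the up-shift fixes the gap vector and raises the genus by one. Since $g+1 \geq 3\ell-1$, Proposition \ref{prop:type} applies and shows $\Gamma$ is a numerical semigroup of genus $g+1$ and type $(g+1) - cotype(v) = (g+1) - \ell = t+1$, which proves (a).

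Part (b) is the mirror image. With $\ell = (g+1)-(t+1) = g-t$ the hypothesis again gives $g+1 \geq 3\ell - 1$, so $\Gamma = \Lambda_{g+1,w}$ for its gap vector $w$ with $cotype(w) = \ell$; the analogous computation (using $1 \notin \Gamma$, valid since $\Gamma$ has positive genus) shows the down-shift sends $\Lambda_{g+1,w}$ to $\Lambda_{g,w}$, and Proposition \ref{prop:type}, now invoked with $g \geq 3\ell - 1$, identifies $\Lambda$ as a semigroup of genus $g$ and type $t$. Moreover the up- and down-shifts are set-theoretic inverses of one another (again because $1\notin\Gamma$), so (a) and (b) together yield a bijection between the two families.

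The step I expect to be the main obstacle is not a single hard inequality but keeping the whole argument inside the admissible regime. One must verify that the hypothesis translates exactly to $g \geq 3\ell-1$ and that this survives the genus shift, so that Proposition \ref{prop:type} is legitimately available for both $\Lambda$ and $\Gamma$, and one must confirm that $\Lambda$ is genuinely in gap-vector form in the first place. This last point is where the arithmetic bites: it rests on $F(\Lambda)\leq g+\ell$ from Proposition \ref{fgt-bound} together with $m(\Lambda)\geq t+1$, which combine to force $2m(\Lambda) > F(\Lambda)$ — precisely the condition guaranteeing that the shifted set is closed under addition and hence a numerical semigroup.
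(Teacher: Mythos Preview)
Your proof is correct and matches the paper's second proof essentially line for line: you translate the hypothesis to $g\geq 3\ell-1$, dispose of $\ell=0$ via Proposition~\ref{gist}, and for $\ell\geq 1$ observe that the shift preserves the gap vector while changing the genus parameter, so that Proposition~\ref{prop:type} and \eqref{eq:t} finish both directions. The paper also records a first, more direct proof that avoids gap vectors by checking closure under addition via $m(\Lambda)\geq t+1$ and Proposition~\ref{fgt-bound}, and handles the type by showing $Gaps(\Lambda)\setminus PF(\Lambda)=Gaps(\Gamma)\setminus PF(\Gamma)$; but your approach is the one the authors single out as more informative about the structure of the semigroups involved.
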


We give two proofs of Theorem \ref{thm:main}. The first one is quite {straightforward}. However, it does not give much information about the shape of the generators of these stabilizer semigroups. The second one allows us to {better} understand such generators. 

{\em First proof of Theorem  \ref{thm:main}.}
First, suppose that $\Lambda$ is a numerical semigroup of genus $g$ and type $t$. Let $\gamma_1$ and $\gamma_2$ be two positive elements of $\Gamma$. Since $\gamma_1-1$ and $\gamma_2-1$ are in $\Lambda$ and since $m(\Lambda)\ge t+1$ (cf. \cite[Corollary 2.23]{RG}), we have that
$$
\gamma_1+\gamma_2-1 = (\gamma_1-1)+(\gamma_2-1) + 1 \ge 2m(\Lambda) +1 \ge 2t+3.
$$
Moreover, since $3t+1\ge 2g$ by hypothesis and since $F(\Lambda)+t\le 2g$ by Proposition~\ref{fgt-bound}, we obtain that
$$
\gamma_1+\gamma_2-1 \ge 2t+3 \ge 2g-t+2 \ge F(\Lambda)+2.
$$
It follows that $\gamma_1+\gamma_2-1\in\Lambda$ and thus $\gamma_1+\gamma_2\in\Gamma$. Therefore $\Gamma$ is closed under addition. Since $|\mathbb{N}_0\setminus\Gamma|=g+1$ by definition, we have that $\Gamma$ is a numerical semigroup of genus $g+1$.

Conversely, suppose that $\Gamma$ is a numerical semigroup of genus $g+1$ and type $t+1$. Let $\lambda_1$ and $\lambda_2$ be two positive elements of $\Lambda$. Since $\lambda_1+1$ and $\lambda_2+1$ are in $\Gamma$ and since $m(\Gamma)\ge t+2$ (cf. \cite[Corollary 2.23]{RG}), we have that
$$
\lambda_1+\lambda_2+1 = (\lambda_1+1)+(\lambda_2+1) - 1 \ge 2m(\Gamma) -1 \ge 2t+3.
$$
Moreover, since $3t+1\ge 2g$ by hypothesis and since $F(\Gamma)+t+1\le 2g+2$ by Proposition~\ref{fgt-bound}, we obtain that
$$
\lambda_1+\lambda_2+1 \ge 2t+3 \ge 2g-t+2 \ge F(\Gamma)+1.
$$
It follows that $\lambda_1+\lambda_2+1\in\Gamma$ and thus $\lambda_1+\lambda_2\in\Lambda$. Therefore $\Lambda$ is closed under addition. Since $|\mathbb{N}_0\setminus\Lambda|=g$ by definition, we have that $\Lambda$ is a numerical semigroup of genus $g$.

For the types, we can show that
\begin{equation}\label{eq0}
Gaps(\Lambda)\setminus PF(\Lambda) = Gaps(\Gamma)\setminus PF(\Gamma).
\end{equation}
Let $a\in Gaps(\Lambda)\setminus PF(\Lambda)$. Since $a\notin PF(\Lambda)$, there exists $b\in\Lambda^*$ such that $a+b\in Gaps(\Lambda)$. Since $a+b+1\in Gaps(\Gamma)$ with $b+1\in\Gamma^*$, it follows that $a\in Gaps(\Gamma)\setminus PF(\Gamma)$.

Conversely, let $a\in Gaps(\Gamma)\setminus PF(\Gamma)$. Since $a\notin PF(\Gamma)$, there exists $b\in\Gamma^*$ such that $a+b\in Gaps(\Gamma)$. If $b=1$, then $\Gamma=\mathbb{N}_0$ of genus $0$, in contradiction with the hypothesis that $\Gamma$ is of genus $g+1$. Since $a+b-1\in Gaps(\Lambda)$ with $b-1\in\Lambda^*$, it follows that $a\in Gaps(\Lambda)\setminus PF(\Lambda)$.

From \eqref{eq0}, we obtain that
$$
g-t(\Lambda)=|Gaps(\Lambda)\setminus PF(\Lambda)|=|Gaps(\Gamma)\setminus PF(\Gamma)| = g+1-t(\Gamma).
$$
Therefore, we have $t(\Lambda)=t(\Gamma)-1$. This completes the proof.
\hfill$\square$
\smallskip

{\em Second proof of Theorem  \ref{thm:main}.} 
Set $\ell=g-t$. The condition $t\geq (2g-1)/3$ is equivalent to $g\geq 3\ell -1$.

(a):  If $\ell=0$, then by Proposition \ref{gist} we have   $\Lambda=\{0, g+1, g+2, \dots \}$. Hence $\Gamma=\{0, g+2, g+3,\dots\}$ has genus and type equal to $g+1$, again by Proposition \ref{gist}.

Assume $\ell >0$. Let $v=(v_1,\dots, v_{2 \ell})$ the gap vector of $\Gamma$. So, $\Lambda=\Lambda_{g, v}$ and $Gaps(\Lambda)=\mathcal{G}_{g,v}$. By construction, the set $\mathbb{N}_{0}\setminus \Gamma = \{1\} \cup \{x+1: x\in Gaps(\Lambda)\}$ has cardinality $g+1$ and the former equals $\mathcal{G}_{g+1, v}$. Therefore, $\Gamma=\Lambda_{g+1, v}$.  
As  $g+1 > g \geq 3 \ell -1$, by Proposition \ref{prop:type} we obtain that $\Gamma$ is a numerical semigroup of genus $g+1$ and using \eqref{eq:t} twice we get 
$$t(\Gamma)=g+1-cotype(v)= (g-cotype(v)) +1= t(\Lambda_{g,v})+1 = t(\Lambda)+1=t+1.$$
 
(b): Let $w$ be the gap vector of $\Gamma$. Then $\Gamma=\Lambda_{g+1, w}$ and $\Lambda=\Lambda_{g, w}$.
As $\ell=g-t=(g+1)-(t+1)$, and $g+1>g>3\ell-1$, from Proposition \ref{prop:type} applied twice we have that $\Lambda$ is a numerical semigroup of genus $g$ and $t(\Lambda)=g-cotype(w)=g- (g+1-t(\Gamma))=t(\Gamma)-1=t$.
\hfill$\square$
\smallskip

For integers $g,t\ge 1$, we denote by $\mathcal{L}(g,t)$ the set of numerical semigroups of genus $g$ and type $t$.
We notice that the second proof of Theorem \ref{thm:main} relies on the fact that the numerical semigroups $\Gamma$ and $\Lambda$ have the same gap vector. We have the following 

\begin{cor} \label{cor:largeg} Let $\ell $ and $g\geq 3\ell-1$ be integers. Then, the set 
$$\left\{v\in \mathbb{Z}^{2\ell}: v=v(\Lambda) \text{ for some }\Lambda\in \mathcal{L}(g, g-\ell) \right\}$$
does not depend on $g$.
\end{cor}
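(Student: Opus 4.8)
The plan is to identify the displayed set with a purely combinatorial set of $(0/1)$-vectors whose description does not mention $g$ at all, and then observe that this identification is valid for every admissible $g$. Concretely, I would prove that for every integer $g\geq 3\ell-1$,
\[
\left\{v\in \mathbb{Z}^{2\ell}: v=v(\Lambda)\text{ for some }\Lambda\in\mathcal{L}(g,g-\ell)\right\}
= \left\{v\in\{0,1\}^{2\ell}: v\text{ has }\ell\text{ entries equal to }1\text{ and } cotype(v)=\ell\right\}.
\]
The right-hand side is manifestly independent of $g$, so the corollary follows immediately once this equality is established. The case $\ell=0$ is degenerate (the vectors live in $\mathbb{Z}^0$ and $\mathcal{L}(g,g)$ consists of a single semigroup by Proposition \ref{gist}), so I would assume $\ell\geq 1$ throughout.

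For the inclusion ``$\subseteq$'', I would start from an arbitrary $\Lambda\in\mathcal{L}(g,g-\ell)$. Recall from the construction of the gap vector that $v(\Lambda)$ is a $(0/1)$-vector of length $2\ell$ with exactly $\ell$ entries equal to $1$, and that $\Lambda=\Lambda_{g,v(\Lambda)}$. Since $g\geq 3\ell-1$, equation \eqref{eq:t} applies and yields $t(\Lambda)=g-cotype(v(\Lambda))$; combined with $t(\Lambda)=g-\ell$ this forces $cotype(v(\Lambda))=\ell$. Hence $v(\Lambda)$ belongs to the right-hand side.

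For the reverse inclusion ``$\supseteq$'', I would take any $(0/1)$-vector $v$ of length $2\ell$ with $\ell$ ones and $cotype(v)=\ell$. Again using $g\geq 3\ell-1$, Proposition \ref{prop:type} guarantees that $\Lambda_{g,v}$ is a numerical semigroup of genus $g$, and by \eqref{eq:t} its type equals $g-cotype(v)=g-\ell$. Thus $\Lambda_{g,v}\in\mathcal{L}(g,g-\ell)$, and its gap vector is exactly $v$, so $v$ lies in the left-hand side. This establishes both inclusions and hence the $g$-independence.

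There is no serious obstacle here: all of the real work has already been carried out in Proposition \ref{prop:type} and \eqref{eq:t}. The only point requiring genuine attention is the recognition that the numerical constraint ``$t(\Lambda)=g-\ell$'' translates, \emph{via} \eqref{eq:t}, into the condition ``$cotype(v)=\ell$'', which refers solely to the combinatorics of the vector $v$ and no longer to $g$; everything else is a routine matching of the two sides of the equality above, together with the observation that $\Lambda=\Lambda_{g,v(\Lambda)}$ makes the correspondence $\Lambda\mapsto v(\Lambda)$ a bijection onto that combinatorial set.
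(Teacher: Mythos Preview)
Your proof is correct. The paper does not spell out a proof of this corollary; it simply remarks that it follows from the second proof of Theorem~\ref{thm:main}, where one observes that the bijection $\Lambda\leftrightarrow\Gamma$ between $\mathcal{L}(g,g-\ell)$ and $\mathcal{L}(g+1,g+1-\ell)$ preserves gap vectors, so the two sets of gap vectors coincide and by induction are independent of $g$.

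Your route is slightly different and arguably more direct: rather than chaining together the bijections $g\to g+1\to g+2\to\cdots$, you identify the set in question, for every $g\geq 3\ell-1$, with the purely combinatorial set $\{v\in\{0,1\}^{2\ell}:\text{$v$ has $\ell$ ones and }cotype(v)=\ell\}$, using Proposition~\ref{prop:type} and \eqref{eq:t} for both inclusions. This has the advantage of giving an explicit description of the stable set (anticipating what the paper calls $\mathcal{V}(\ell)$) and of avoiding the detour through Theorem~\ref{thm:main}. The paper's route, on the other hand, makes the corollary an immediate by-product of the bijection already constructed. Both arguments rest on exactly the same ingredients (Proposition~\ref{prop:type} and \eqref{eq:t}), so the difference is organizational rather than substantive.
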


The vector $v\in \mathbb{Z}^{2 \ell}$ is called a {\em stable gap vector} if there exists $\Lambda$ a numerical semigroup of genus $g$ and type $g-\ell$ with $v=v(\Lambda)$ for some (and hence for all) $g\geq 3 \ell -1$. We denote  $\mathcal{V}(\ell)$ the set of stable gap vectors in $\mathbb{Z}^{2\ell}$.


\begin{thm}\label{thm:main1}
\label{thm:stabilizer}  Let $\ell $ and $g\geq 3\ell-1$ be integers. Then, $$n(g, g-\ell)=|\mathcal{V}(\ell)|$$
where  $\mathcal{V}(\ell)$ denotes the set of stable gap vectors in $\mathbb{Z}^{2\ell}$.
\end{thm}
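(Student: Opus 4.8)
The plan is to show that for each fixed $\ell\ge 0$ and each $g\ge 3\ell-1$, the numerical semigroups of genus $g$ and type $g-\ell$ are in bijection with the set $\mathcal{V}(\ell)$ of stable gap vectors, so that counting them amounts to counting stable gap vectors, a set which by Corollary~\ref{cor:largeg} does not depend on $g$. The key observation is that the gap-vector construction gives a two-sided correspondence between semigroups in $\mathcal{L}(g,g-\ell)$ and $(0/1)$-vectors in $\mathbb{Z}^{2\ell}$ with $\ell$ ones, and the threshold $g\ge 3\ell-1$ is exactly what makes this correspondence both well defined and injective.

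First I would fix $\ell\ge 0$ and $g\ge 3\ell-1$, and consider the map $\Lambda\mapsto v(\Lambda)$ sending a numerical semigroup of genus $g$ and type $g-\ell$ to its gap vector. For $\ell=0$ this is immediate from Proposition~\ref{gist}, since there is a unique such semigroup and $|\mathcal{V}(0)|=1$; so assume $\ell>0$. The discussion preceding the definition of the gap vector shows that any $\Lambda\in\mathcal{L}(g,g-\ell)$ satisfies $\{1,\dots,g-\ell\}\subseteq Gaps(\Lambda)\subseteq\{1,\dots,g+\ell\}$, so $v(\Lambda)$ is a well-defined $(0/1)$-vector of length $2\ell$ with exactly $\ell$ entries equal to $1$. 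Moreover $\Lambda$ is recovered from $g$ and $v(\Lambda)$ via $\Lambda=\Lambda_{g,v(\Lambda)}$, so the map $\Lambda\mapsto v(\Lambda)$ is injective, and its image is by definition the set $\{v\in\mathbb{Z}^{2\ell}: v=v(\Lambda)\text{ for some }\Lambda\in\mathcal{L}(g,g-\ell)\}$.

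Next I would identify this image with $\mathcal{V}(\ell)$. By the definition of a stable gap vector, $\mathcal{V}(\ell)$ is precisely the set of vectors $v$ for which $\Lambda_{g,v}$ lies in $\mathcal{L}(g,g-\ell)$; Corollary~\ref{cor:largeg} guarantees that this set is independent of the choice of $g\ge 3\ell-1$, so it is legitimate to denote it $\mathcal{V}(\ell)$ without reference to $g$. To close the correspondence I must check surjectivity onto $\mathcal{V}(\ell)$: given $v\in\mathcal{V}(\ell)$, Proposition~\ref{prop:type}$(i)$ ensures that $\Lambda_{g,v}$ is a numerical semigroup of genus $g$, and by~\eqref{eq:t} its type equals $g-cotype(v)$, which equals $g-\ell$ exactly when $cotype(v)=\ell$. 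Thus $v\in\mathcal{V}(\ell)$ forces $\Lambda_{g,v}\in\mathcal{L}(g,g-\ell)$ with $v(\Lambda_{g,v})=v$, so the map is a bijection and $n(g,g-\ell)=|\mathcal{L}(g,g-\ell)|=|\mathcal{V}(\ell)|$.

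The one point requiring care, which I regard as the main obstacle, is the interplay between the two characterizations of $\mathcal{V}(\ell)$: the set $\{v: \Lambda_{g,v}\in\mathcal{L}(g,g-\ell)\}$ is not the same as all $(0/1)$-vectors with $\ell$ ones, because by Remark~2.7 some such vectors yield $\Lambda_{g,v}$ whose type exceeds $\ell$ (equivalently $cotype(v)<\ell$). So I would be careful to phrase $\mathcal{V}(\ell)$ as the vectors with $cotype(v)=\ell$, and use~\eqref{eq:t} to translate the type condition $t(\Lambda)=g-\ell$ into the combinatorial condition $cotype(v)=\ell$, which is manifestly independent of $g$. With that translation in hand the $g$-independence of both the source count and the target set follows cleanly, completing the proof.
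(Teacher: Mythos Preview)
Your proof is correct and follows essentially the same approach as the paper's: establish a bijection $\mathcal{L}(g,g-\ell)\to\mathcal{V}(\ell)$ via $\Lambda\mapsto v(\Lambda)$, with injectivity coming from the identity $\Lambda=\Lambda_{g,v(\Lambda)}$ and surjectivity from the very definition of $\mathcal{V}(\ell)$ together with Corollary~\ref{cor:largeg}. Your additional discussion of the cotype characterization is accurate but unnecessary for this particular statement, and note the small slip in the last paragraph where ``type exceeds $\ell$'' should read ``type exceeds $g-\ell$''.
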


\begin{proof}
Consider the map $\varphi:\mathcal{L}(g, g-\ell)\to \mathcal{V}(\ell)$ where $\varphi(\Lambda)=v(\Lambda)$. By Corollary \ref{cor:largeg},  
the image of $\varphi$ does not depend on $g$, thus $\varphi$ is surjective.
We note that if $\varphi(\Lambda)=v$ then $\Lambda=\Lambda_{g,v}$. This proves that $\varphi$ is injective.
\end{proof}
 
It would be interesting to determine in terms of $\ell$ the stabilized value of $n(g, g-\ell)$  and the set $\mathcal{L}(g, g-\ell)$ for $g\gg 0$.
According to Theorem \ref{thm:stabilizer}, this process goes hand in hand with finding the set $\mathcal{V(\ell)}$ of stable gap vectors for any $\ell$.
  The following two propositions are on this direction. 

\begin{prop}\label{propo:1} \label{q-small-ell} Let $\Lambda$ be a numerical semigroup of genus $g>0$. 
\begin{enumerate}
\item[(a)] $t(\Lambda)= g$ if and only if $\Lambda=\langle g+1,\dots, 2g+1\rangle$, that is, 
$$Gaps(\Lambda)=\{ 1,\dots, g\}.$$
\item[(b)] If $g\geq 2$ then, $t(\Lambda)=g-1$ if and only if $\Lambda=\langle g, g+2, g+3, \dots, 2g-1, 2g+1\rangle$, that is, 
\begin{eqnarray*}
v(\Lambda)=(0,1)  \text{ and  }  Gaps(\Lambda)&=&\{1,\dots, g-1, \star, g+1\}.
\end{eqnarray*}
\item[(c)] If $g\geq 5$ then, $t(\Lambda)= g-2$ if and only if
\begin{eqnarray*}
v(\Lambda)=(1,0,0,1) \text{ and  }  Gaps(\Lambda)&=& \{1,\dots, g-1, \star,\star, g+2\}, \text{ or} \\
v(\Lambda)=(0,1,1,0) \text{ and  }  Gaps(\Lambda)&=& \{ 1,2, \dots, g-2, \star, g, g+1 \}, \text{ or} \\
v(\Lambda)=(0,1,0,1) \text{ and  }  Gaps(\Lambda)&=& \{1,2, \dots, g-2, \star, g, \star, g+2\}.
\end{eqnarray*}
\item[(d)] If $g\geq 8$ then, $t(\Lambda)=g-3$ if and only if
\begin{eqnarray*}
v(\Lambda)=(1,1,0,0,0,1) \text{ and  } Gaps(\Lambda)&=&\{1,\dots, g-1, \star,\star,\star, g+3\}, \text{ or} \\
v(\Lambda)=(1,0,1,0,0,1) \text{ and  } Gaps(\Lambda)&=&\{1,\dots, g-2, \star,g,\star, \star,g+3\}, \text{ or} \\
v(\Lambda)=(1,0,0,1,1,0) \text{ and  } Gaps(\Lambda)&=&\{1,\dots, g-2, \star,\star, g+1, g+2\}, \text{ or} \\
v(\Lambda)=(0,1,1,1,0,0) \text{ and  } Gaps(\Lambda)&=&\{1,\dots, g-3, \star,g-1, g, g+1\}, \text{ or} \\
v(\Lambda)=(0,1,1,0,1,0) \text{ and  } Gaps(\Lambda)&=&\{1,\dots, g-3, \star, g-1, g, \star, g+2\}, \text{ or} \\
v(\Lambda)=(0,1,1,0,0,1) \text{ and  } Gaps(\Lambda)&=&\{1,\dots, g-3, \star,g-1, g, \star,\star, g+3\}, \text{ or} \\
v(\Lambda)=(0,1,0,1,0,1) \text{ and  } Gaps(\Lambda)&=&\{1,\dots, g-3, \star,g-1,\star, g+1,\star, g+3\}.\\
\end{eqnarray*}
\end{enumerate}
In this description, the $\star$ means that the corresponding element is missing in the sequence.
\end{prop}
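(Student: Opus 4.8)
The plan is to reduce the entire statement to a finite enumeration of gap vectors by invoking the machinery of Proposition \ref{prop:type} and Theorem \ref{thm:main1}. Writing $\ell=g-t$, the four parts (a)--(d) correspond to $\ell=0,1,2,3$, and the hypotheses $g>0$, $g\geq 2$, $g\geq 5$, $g\geq 8$ are exactly the conditions $g\geq 3\ell-1$ demanded by Proposition \ref{prop:type}. Part (a), the case $\ell=0$, is immediate from Proposition \ref{gist}. For $\ell\geq 1$ the crucial point is that, by \eqref{eq:t}, a semigroup $\Lambda_{g,v}$ attached to a balanced $(0/1)$-vector $v$ of length $2\ell$ has type $g-\ell$ if and only if $cotype(v)=\ell$; and by Theorem \ref{thm:main1} the assignment $\Lambda\mapsto v(\Lambda)$ is a bijection between $\mathcal{L}(g,g-\ell)$ and the set of such vectors, with inverse $v\mapsto\Lambda_{g,v}$. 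Hence it suffices to list all balanced vectors with $cotype$ exactly $\ell$ and then read off the associated gaps and generators.

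For the enumeration I would simply run through all $\binom{2\ell}{\ell}$ balanced vectors, namely $2$, $6$ and $20$ vectors for $\ell=1,2,3$, and compute $cotype(v)=|\{j-i:i<j,\ v_i=0,\ v_j=1\}|$ directly for each, keeping those whose value equals $\ell$. For $\ell=1$ only $(0,1)$ survives; for $\ell=2$ exactly $(1,0,0,1)$, $(0,1,1,0)$ and $(0,1,0,1)$ survive, yielding the three cases of (c); and for $\ell=3$ exactly the seven vectors displayed in (d) survive. The extreme computations in Example \ref{ex:cotype} (all $1$'s first, all $0$'s first, and the fully alternating pattern) serve as a convenient check that the boundary cases fall where expected.

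It then remains to translate each surviving vector $v$ into the explicit data in the statement. The gap set is obtained at once as $Gaps(\Lambda_{g,v})=\mathcal{G}_{g,v}$, which produces the displayed gap sequences with $\star$ marking the deleted entries. For the minimal generators I would apply the criterion that $a\in\Lambda_{g,v}\setminus\{0\}$ is a minimal generator precisely when it is not a sum of two smaller nonzero elements. Writing $m=m(\Lambda_{g,v})$, the bound $g\geq 3\ell-1$ forces $2m\geq 2(g-\ell+1)\geq g+\ell+1$, so no element below $2m$ can be a proper sum; consequently every element of $\Lambda_{g,v}$ lying in $[m,2m)$ is a minimal generator. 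Only finitely many further generators occur just beyond $2m$, arising from the $\ell$ gaps in $[g-\ell+1,g+\ell]$ that obstruct certain sums (for instance the element $2g+1$ in part (b)), and these can be listed by inspection for each $v$. Carrying this out reproduces the generator lists in each part.

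The step I expect to be the most laborious is the $\ell=3$ case: screening all $20$ balanced vectors for $cotype$ equal to $3$, and then verifying the minimal generating sets uniformly in $g$, that is, confirming independently of the (large) value of $g$ that each listed element is genuinely a minimal generator and that the list is exhaustive. The gap-vector formalism renders both tasks mechanical, but they are the only places where real case-work is unavoidable.
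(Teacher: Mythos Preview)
Your approach is essentially identical to the paper's: both reduce part (a) to Proposition \ref{gist} and, for $\ell=1,2,3$, to enumerating the balanced $(0/1)$-vectors of length $2\ell$ with cotype exactly $\ell$ via Proposition \ref{prop:type} (the paper cites Proposition \ref{prop:type} directly rather than the derived Theorem \ref{thm:main1}, but the content is the same). The paper's proof merely lists the cotypes of the candidate vectors and does not verify the minimal generating sets at all, so your paragraph on extracting the generators from $v$ actually goes a bit beyond what the paper records.
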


\begin{proof} Set $\ell=g-t$. The case (a) when $\ell=0$ is covered by Proposition \ref{gist}. For the rest,
by Proposition \ref{prop:type}, it suffices to find the vectors in $v\in \{0, 1\}^{2\ell}$ having $\ell$ entries equal to $0$ and $cotype(v)=\ell$, for $\ell=1,2,3$.

When $\ell=1$, the only possible gap vectors are $(0,1)$ and $(1,0)$. Since $cotype(0,1)=1$ and $cotype(1,0)=0$ then $(1,0)$ is the only stable gap vector.

When $\ell=2$, there are $6$ possible gap vectors: $$(1,1,0,0), (1,0,1,0), (1,0,0,1), (0,1,1,0), (0,1,0,1), (0,0,1,1).$$ Their respective cotypes are equal $0, 1, 2, 2, 2$ and $3$. Therefore, the three stable  gap vectors are $(1,0,0,1), (0,1,1,0), (0,1,0,1)$.

When $\ell=3$, it is routine to check that among the $20$ possible $(0/1)$-vectors of length $6$ with $3$ zero entries, only the  $7$ vectors listed at part $(d)$, have  cotype equal to $3$.
\end{proof}

In the next proposition we describe for each   integer $\ell >0$ several families of stable gap vectors in $\{0,1\}^{2\ell}$.  

\begin{prop} \label{propo:2} Let $\ell\ge 1$ be an integer and let $v=(v_1,\dots, v_{2\ell})$  be a vector with $\ell$ entries equal to $0$ and $\ell$ entries equal to $1$. 
\begin{enumerate}
\item[(i)] Assume $v_i=0$ for a unique $1\leq i\leq \ell$. Then, $v$ is a stable gap vector if and only if 
\begin{enumerate}
\item $i=1$, or
\item $2\leq i\leq \ell$ and $v_{2\ell}=1$.
\end{enumerate}
\item[(ii)] Assume $v_1=\dots = v_{\ell-2}=1$ and that $v_{\ell+m}=v_{\ell+n}=1$ for $0\leq m <n \leq \ell$. Then, $v$ is a stable gap vector if and only if
\begin{enumerate}
\item $n=\ell-1$ and $\ell/2-1\leq m\leq \ell-2$, or
\item $n=\ell$ and $0\leq m \leq \ell/2-1.$
\end{enumerate}
\item[(iii)] Assume $v_2=v_4=\dots =v_{2\ell}=1$. Then, $v$ is a stable gap vector.
\item[(iv)] Assume $\ell \geq 5,  v_2=\dots=v_{\ell-2}=v_\ell=1$, and that $v_{\ell+m}=v_{\ell+n}=1$ for some $1\leq m<n\leq \ell$. Then, $v$ is a stable gap vector if and only if 
\begin{enumerate}
\item $1\leq m <n\leq \ell-2$ with $m\neq \ell-3$ and $n\neq \ell-3$, or
\item $m=1$ and $n=\ell-1$, or
\item $\ell \geq 6$, $m=2$ and $n=\ell$.
\end{enumerate} 
 
\item[(v)] Assume  $v_2=\dots=v_{\ell -1}=1$, and that $v_{\ell+m}=v_{\ell+n}=1$ for some $1\leq m<n\leq \ell$. Then, $v$ is a stable gap vector if and only if 
\begin{enumerate}
\item $2\leq m<n\leq \ell -2$, or 
\item $m=1$ and $n\neq \ell-1$.
\end{enumerate}
\end{enumerate} 
\end{prop}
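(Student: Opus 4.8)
The plan is to reduce the entire statement to a computation of cotypes. As in the proof of Proposition \ref{propo:1}, and following from Proposition \ref{prop:type}, equation \eqref{eq:t} and Corollary \ref{cor:largeg}, a vector $v\in\{0,1\}^{2\ell}$ with $\ell$ entries equal to $0$ and $\ell$ entries equal to $1$ is a stable gap vector if and only if $cotype(v)=\ell$. Indeed, for $g\geq 3\ell-1$ the semigroup $\Lambda_{g,v}$ has genus $g$ and type $g-cotype(v)$, and one checks directly that $v(\Lambda_{g,v})=v$; thus $v$ is realized as $v(\Lambda)$ for some $\Lambda\in\mathcal{L}(g,g-\ell)$ exactly when $g-cotype(v)=g-\ell$. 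So in each part it suffices to locate the positions of the zeros and ones, form the difference set
$$
D(v)=\{\,j-i:\ 1\leq i<j\leq 2\ell,\ v_i=0,\ v_j=1\,\},
$$
and decide when $|D(v)|=\ell$.

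Parts (iii) and (i) require only direct counts. In (iii) the hypothesis forces zeros exactly at the odd positions and ones at the even positions, so this is literally Example \ref{ex:cotype}(iii) and $cotype(v)=\ell$ unconditionally. In (i) the first half $1,\dots,\ell$ carries a single zero at position $i$ and $\ell-1$ ones, which forces the second half to carry $\ell-1$ zeros and a single one, say at position $\ell+k$. The first-half ones lie below every zero and contribute nothing, so only the productive positions matter: the zero at $i$ yields the block $\{1,\dots,\ell-i\}$ together with the value $\ell+k-i$, while the second-half zeros paired with $\ell+k$ yield $\{1,\dots,k-1\}$. One finds $D(v)=\{1,\dots,\max(\ell-i,k-1)\}\cup\{\ell+k-i\}$ with the singleton strictly larger, so $|D(v)|=\max(\ell-i,k-1)+1$. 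Since always $\max(\ell-i,k-1)\leq \ell-1$, equality $|D(v)|=\ell$ holds precisely when $i=1$ or $k=\ell$ (that is $v_{2\ell}=1$), which is exactly conditions (a)--(b).

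Part (ii) is the same computation with two productive one positions $\ell+m<\ell+n$: the zeros at $\ell-1$ and below produce $\{1,\dots,m+1\}$ from $\ell+m$, and $\{1,\dots,n+1\}\setminus\{n-m\}$ from $\ell+n$. Hence $D(v)=\{1,\dots,n+1\}$ when $n-m\leq m+1$, and $D(v)=\{1,\dots,n+1\}\setminus\{n-m\}$ otherwise. Imposing $|D(v)|=\ell$ gives the two regimes $n=\ell-1$ with $m\geq\ell/2-1$, and $n=\ell$ with $m\leq\ell/2-1$, after translating $n-m\leq m+1$ (equivalently $m\geq (n-1)/2$) into the stated integer bounds and checking the parity of $\ell$ at the boundary.

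The hard part will be (iv) and (v). These have the same flavour, but the prescribed middle block is longer and interacts with the two free one positions $\ell+m,\ell+n$ in a way that creates several boundary coincidences; this is the source of the excluded values $m\neq\ell-3$, $n\neq\ell-3$ and of the special sub-cases with $n\in\{\ell-1,\ell\}$. For these I would carefully track which differences $n-m$, and which differences produced jointly by the long block and by each of the two one positions, can collide or leave a single gap in the relevant interval $\{1,\dots,n+1\}$, and then solve $|D(v)|=\ell$. The bookkeeping here is delicate precisely because one missing or doubly-covered difference shifts the count by exactly one; I expect this case analysis, rather than any conceptual step, to be the main obstacle, and would organize it by the position of $n$ relative to $\ell-2,\ell-1,\ell$ in order to keep the coincidences under control.
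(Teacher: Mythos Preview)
Your reduction to the criterion $cotype(v)=\ell$ is exactly the paper's approach, and your treatments of parts (i), (ii), and (iii) are correct and essentially identical to the paper's argument (the paper writes $m$ where you write $k$ in (i), and your formula $D(v)=\{1,\dots,m+1\}\cup(\{1,\dots,n+1\}\setminus\{n-m\})$ in (ii) is literally the paper's).

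The genuine gap is that you do not prove (iv) and (v); you only describe a plan. These are the substantive cases, and the bookkeeping you anticipate must actually be carried out. The paper does it as follows. In (iv) the zeros among $\{1,\dots,\ell\}$ are exactly $1$ and $\ell-1$; the differences $j-i$ with $i<j\le\ell$ already yield $\{1,\dots,\ell-3,\ell-1\}$, and adding $j\in\{\ell+m,\ell+n\}$ with $i\in\{1,\ell-1\}$ contributes $m+1,n+1,\ell+m-1,\ell+n-1$. One then observes that $\mathcal{D}$ already contains the $\ell$-element set $\{1,\dots,\ell-3,\ell-1,\ell+m-1,\ell+n-1\}$, so $v$ is stable if and only if $\mathcal{D}$ equals this set. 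Forcing $m+1$, $n+1$, and all differences with $i>\ell$ to lie in this set is what produces the exclusions $m,n\neq\ell-3$, the special subcases $(m,n)=(1,\ell-1)$ and $(m,n)=(2,\ell)$, and the extra requirement $\ell\ge6$ in the last subcase (since for $\ell=5$ one gets $m+1=3=\ell-2\notin\{1,\dots,\ell-3,\ell-1\}$). Part (v) is parallel with zeros at $1$ and $\ell$, base set $\{1,\dots,\ell-2\}$, and target $\{1,\dots,\ell-2,\ell+m-1,\ell+n-1\}$. Your outline correctly anticipates organizing by the size of $n$, but until these equalities are checked the proof is incomplete.
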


\begin{proof}
For each of the formats analyzed, we have to chararacterize when $cotype(v)=\ell$.
We denote $\mathcal{D}=\{ j-i:1\leq i<j\leq 2\ell \text{ with } v_i=0 \text{ and } v_j=1\}$, so $cotype(v)=|\mathcal{D}|$.

$(i)$] It follows that $v_k=1$ for all $1\leq k\leq \ell$ with $k\neq i$. Since there are $\ell$ entries equal to $1$ in $v$, we have $v_{\ell+m}=1$ for a unique $1\leq m\leq \ell$, and $v_{\ell+n}=0$ for all $1\leq n\leq \ell$ where $n\neq m$. So  $\mathcal{D}$ contains $\ell+m-i$,  the numbers $1,\dots, \ell-i$ if $i<\ell$, and $1,\dots, m-1$ if $m\geq 2$. Hence, $|\mathcal{D}|\leq 1+\max\{\ell -i, m-1\}\leq \ell$. Therefore, if $v$ is a stable gap vector then $\ell-i=\ell-1$, or $m-1=\ell-1$.

If $i=1$ then $\mathcal{D}=\{1,\dots, \ell-1, \ell-1+m\}$, and $v$ is a stable gap vector for all $1\leq m\leq \ell$.

If $i>1$ and $m=\ell$, then $\mathcal{D}=\{1,\dots, \ell -1, 2\ell-i\}$, and $v$ is a stable gap vector.

$(ii)$] From the hypothesis $v_{\ell -1}=0$. So, $\mathcal{D}=\{1,\dots, m+1\}\cup (\{1,\dots, n+1\}\setminus\{n-m\})$.  

If $n-m \leq m+1$, that is, $n\leq 2m+1$, then $|\mathcal{D}|=n+1$. Hence $v$ is a stable gap vector if and only if $n=\ell-1$ and $\ell/2-1\leq m <n=\ell-1$.

If $n-m> m+1$, that is, $n\geq 2m+2$, then $|\mathcal{D}|=n$. Hence $v$ is a stable gap vector if and only if $n=\ell$ and $0\leq m\leq (n-2)/2$.

$(iii)$] This follows from Example \ref{ex:cotype} $(iii)$.

$(iv)$] Clearly, $ v_1=v_{\ell-1}= 0$. Considering the differences $j-i$ in $\mathcal{D}$ where $i<j \leq \ell$ we obtain that $\{1,\dots, \ell-3, \ell-1\}\subseteq\mathcal{D}$.  When $j\in \{ \ell+m, \ell+n\}$ and $i\in \{1, \ell-1\}$ we get that also $m+1, n+1, \ell+m-1, \ell+n-1 \in \mathcal{D}$.
We already notice $\ell$ distinct values in $\mathcal{D}$. So, $v$ is a stable gap vector if and only if 
\begin{equation}\label{somed}
\mathcal{D}=\{1,\dots, \ell-3, \ell-1, \ell+m-1, \ell+n-1\}.
\end{equation}

If $n\leq   \ell-2$ then for any difference $j-i$ in $\mathcal{D}$  with $\ell<i<j\leq 2\ell$ and $v_j=1, v_i=0$ we have  $j-i\leq \ell-3$. So, in this case we have that $cotype(v)=\ell$ if and only if $m\neq \ell-3$ and $n\neq \ell-3$.

Assume $n=\ell-1$. If $cotype(v)=\ell$, then $v_{\ell+1}=1$ and $m=1$, otherwise $\ell-2=(2\ell-1)-(\ell+1) \in \mathcal{D}$, a contradiction with \eqref{somed}.  
    Conversely, if $m=1$ it is immediate to check that \eqref{somed} holds. 

Assume $n=\ell$. If $cotype(v)=\ell$ then $v_{\ell+2}=1$ and $m=2$, arguing as before.  Note that if $\ell=5$ then  $m+1=3 = \ell-2$, so \eqref{somed} does not hold.  However, it is straightforward to check that once $\ell >5$ and $m=2$ then \eqref{somed} is verified.

$(v)$] Clearly, $v_1=v_\ell=0$. As in the proof of $(v)$, considering the differences $j-i$  where $i\leq \ell$ and $v_i=0$, $v_j=1$,  we obtain that $\{1,2, \dots, \ell-2,  m, n, \ell+m-1, \ell+n-1\} \subseteq \mathcal{D}$.
So, $v$ is a stable gap vector if and only if
\begin{equation} \label{somed2}
\mathcal{D}=\{1,2,\dots, \ell-2, \ell+m-1, \ell+n-1\}.
\end{equation}
 
Assume $m>1$. Then $\ell<\ell+m-1<\ell+n-1$. So, if \eqref{somed2} holds then $m<n\leq \ell -2$. Conversely, it is easy to verify that \eqref{somed2} holds when $n\leq \ell-2$.

If $m=1$ and \eqref{somed2} holds, then $n\neq \ell-1$. Conversely, it is easy to check that whenever $2\leq n \leq \ell$ and $n\neq \ell-1$, equation  \ref{somed2} holds.
\end{proof}

\begin{thm}\label{thm:main2}
Let $\ell \geq 6$ and  $g\geq 3\ell -1$ be integers. Then, $n(g, g-\ell)\geq \ell^2-3\ell+10 $.
\end{thm}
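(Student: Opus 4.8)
The plan is to produce at least $\ell^2-3\ell+10$ pairwise distinct stable gap vectors and then invoke Theorem~\ref{thm:stabilizer}. Since $g\ge 3\ell-1$, that theorem gives $n(g,g-\ell)=|\mathcal{V}(\ell)|$, so the statement is equivalent to the bound $|\mathcal{V}(\ell)|\ge \ell^2-3\ell+10$. I would obtain the required vectors from the five families of stable gap vectors described in Proposition~\ref{propo:2}, count each of them, and then verify that they are essentially disjoint.

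First I would count each family as a function of $\ell$. Family (i) yields $2\ell-1$ vectors: $\ell$ of them have the unique first-half zero at position $1$, and $\ell-1$ have that zero at some position $2\le i\le\ell$ together with $v_{2\ell}=1$. Family (iii) yields the single alternating vector. Families (iv) and (v) each contribute $\binom{\ell-3}{2}=\frac{(\ell-3)(\ell-4)}{2}$ vectors from their quadratic main ranges, together with constant respectively linear boundary contributions, so that jointly they account for $\ell^2-6\ell+12$ vectors. Family (ii), after discarding its single $m=0$ instance, contributes $\ell-1$ vectors if $\ell$ is even and $\ell-2$ if $\ell$ is odd; here the parity of $\ell$ enters through the thresholds $\ell/2-1$ appearing in cases (a) and (b).

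The bookkeeping device that keeps the families disjoint is to record, for each vector $v$, the set $Z(v)$ of positions of its zeros among the first $\ell$ coordinates. Family (i) has $|Z(v)|=1$; family (iii) has $|Z(v)|\ge 3$ for $\ell\ge 5$; and families (ii), (iv), (v) each have $|Z(v)|=2$, but with the three \emph{distinct} zero-sets $\{\ell-1,\ell\}$, $\{1,\ell-1\}$ and $\{1,\ell\}$ respectively (distinct because $\ell\ge 6$ keeps $1,\ell-1,\ell$ apart). Within a single family, distinct parameter values give distinct vectors, since the positions of the two second-half ones determine $v$ once the first-half pattern is fixed. The only coincidence across families is the $m=0$ vector of family (ii), which equals the family-(i) vector whose single first-half zero sits at $\ell-1$ and has $v_{2\ell}=1$; removing it from family (ii) makes the five contributions pairwise disjoint. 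Summing them gives $\ell^2-3\ell+11$ for $\ell$ even and $\ell^2-3\ell+10$ for $\ell$ odd, in both cases at least $\ell^2-3\ell+10$.

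The hard part is not a single estimate but the disjointness and boundary accounting: one must confirm that the only inter-family overlap is this one $m=0$ vector, must treat the parity of $\ell$ in family (ii) carefully (it is exactly this parity that makes the odd case meet the bound with equality), and must check that $\ell\ge 6$ suffices to validate all the constraints built into families (iv) and (v) in Proposition~\ref{propo:2}, in particular case (c) of family (iv). Everything else reduces to elementary counting of the parameter ranges listed in Proposition~\ref{propo:2}.
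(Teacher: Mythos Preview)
Your proposal is correct and follows essentially the same route as the paper's proof: both count the stable gap vectors supplied by Proposition~\ref{propo:2}, identify the single overlap (the $m=0$ case of (ii) coinciding with the $i=\ell-1$ case of (i)(b)), and sum to obtain $\ell^2-3\ell+10$ (odd $\ell$) or $\ell^2-3\ell+11$ (even $\ell$). Your use of the invariant $Z(v)$ to separate the families is a tidy way to justify the disjointness that the paper merely asserts, but the strategy and the final arithmetic are identical.
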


\begin{proof}
We count the stable gap vectors presented in Proposition \ref{propo:2}. 
There are $\ell$ vectors for $(i)(a)$,  $\ell-1$ vectors for $(i)(b)$ and $1$ vector for $(iii)$.

For $(ii)$, we note that for each $0\leq m\leq \ell -2$ and $m\neq \ell/2$ we may find a unique $n$ in $\{ \ell-1, \ell \}$ such that $v$ is a stable gap vector. If $\ell$ is even, then for $m=\ell/2$, $n$ can be any of $\ell$ or $\ell-1$ in a stable gap vector. Therefore, for $(ii)$ we identified $\ell-1$, or $\ell$ vectors depending on $\ell$ being odd, or even, respectively.

For $(iv)$, ${\ell -3\choose 2} +2$ stable gap vectors are described, while for $(v)$ $ {\ell-3 \choose 2} +\ell-2$ more vectors are given.

The only overlap between the cases in Proposition \ref{propo:2} is the vector which occurs at $(i) (b)$ for $i=\ell-1$ and at the same time at $(ii) (b)$ for $m=0, n=\ell$.
Hence, if $\ell \geq 6$ and $g\geq 3\ell -1$ then
$$\begin{array}{ll}
n(g, g-\ell)& \geq 2\ell-1 +(\ell-1)+1+\left({\ell -3\choose 2} +2\right)+\left({\ell-3 \choose 2} +\ell-2\right)-1\\
&=\ell^2-3\ell+10.
\end{array}$$
\end{proof}

\section{{Unimodality} and leaves}\label{sec:uni-leave}
A sequence $a_0,\dots ,a_n$ of real numbers is said to be {\em unimodal} if for some $0\le j\le n$ we have $a_0\le a_1\le \cdots \le a_j\ge a_{j+1}\ge \cdots \ge a_n$.

\begin{thm} The sequence $n(g,1), n(g,2),\dots ,n(g,g)$ is unimodal for each $1\le g\le 33$.
\end{thm}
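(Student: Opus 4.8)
The plan is to treat this as a finite computational verification, since the statement concerns only the explicit range $1 \le g \le 33$ and no closed form for $n(g,t)$ is available in the intermediate range of $t$. Accordingly, I would not look for a structural or analytic unimodality argument, but instead produce the complete array of values and inspect each row.

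First I would assemble the full table of values $n(g,t)$ for all $1 \le t \le g \le 33$; this is exactly the data recorded in Table \ref{table1}, obtained by enumerating the numerical semigroup tree $\mathcal{T}$ down to genus $33$ with the \texttt{numericalsgps} package in \texttt{GAP} and tabulating each semigroup by its genus and type. With this array in hand, unimodality of each row is a mechanical check: for each fixed $g$ I would locate an index $j$ with $n(g,j)$ maximal and verify the two monotone runs $n(g,1)\le\cdots\le n(g,j)$ and $n(g,j)\ge\cdots\ge n(g,g)$ by comparing consecutive entries, or equivalently check that the sign sequence of the consecutive differences $n(g,t+1)-n(g,t)$ switches from $\ge 0$ to $\le 0$ at most once. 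Running this test across all $33$ rows confirms the claim. The only genuinely expensive ingredient is the enumeration itself: the number of numerical semigroups of genus $g$ grows exponentially, on the order of $\phi^{g}$ with $\phi=(1+\sqrt{5})/2$, so genus $33$ already involves a very large number of semigroups and is the real bottleneck, while the unimodality test on the resulting data is instantaneous.

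I would also record what structural input is available, even though it is not needed to settle the finite range. Theorem \ref{thm:stabilizer} shows that for the upper part of each row, namely $t=g-\ell$ with $g\ge 3\ell-1$, one has $n(g,g-\ell)=|\mathcal{V}(\ell)|$, independent of $g$; the values in Table \ref{table:2} (namely $1,3,7,15,35,\dots$) indicate that this quantity is strictly increasing in $\ell$. Consequently, reading each sequence from $t=g$ downward, the tail is increasing in $\ell$, which means that as $t$ approaches $g$ the values decrease; this accounts for the descending branch of the unimodal profile via the stabilizer phenomenon. What remains beyond the present tools is the behavior of $n(g,t)$ for small and intermediate $t$, where the peak sits and where no formula is known. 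This is precisely why the statement is phrased as a verified finite range rather than for all $g$, and proving unimodality for every $g$ — in particular controlling the single sign change of the differences uniformly — would be the main obstacle to a general theorem.
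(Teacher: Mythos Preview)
Your approach is correct and matches the paper's own proof exactly: the paper simply states that the result can be checked from the values in Table~\ref{table1}, which is precisely the finite computational verification you describe. Your additional remarks about the stabilizer phenomenon explaining the descending tail are a nice elaboration but go beyond what the paper uses for this particular theorem.
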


\begin{proof} {This} result can be checked from values in Table \ref{table1}.
\end{proof}

We notice that  for fixed $g$ the  maximum of $n(g,t)$ is attained when $t$ is around $\lceil \frac{g}{4}\rceil$; see circled values in Table \ref{table1}. 

\begin{conj} Let $1\le t\le g$ be integers. Then, the sequence $n(g,1), n(g,2),\dots ,n(g,g)$ is unimodal. 
\end{conj}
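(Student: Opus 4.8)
The natural line of attack is the injective method for unimodality: locate the mode $t^\ast(g)$ of the sequence (experimentally $t^\ast(g)\approx \lceil g/4\rceil$, as observed after the previous theorem) and prove the two one-sided monotonicities $n(g,t)\le n(g,t+1)$ for $1\le t<t^\ast(g)$ and $n(g,t)\ge n(g,t+1)$ for $t^\ast(g)\le t<g$. Each inequality would follow from a sign-consistent injection between consecutive type-classes $\mathcal{L}(g,t)$ and $\mathcal{L}(g,t+1)$. Thus the whole problem reduces to producing, \emph{at fixed genus}, maps on numerical semigroups that change the type by exactly one and are injective --- a genus-preserving analogue of the lateral moves in the tree $\mathcal T$.

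The far right tail can be handled with the machinery already in place. Since $(2g-1)/3>g/4$ for all $g\ge1$, the entire stable region $t\ge (2g-1)/3$ lies strictly to the right of the mode, i.e.\ on the decreasing side. There Theorem~\ref{thm:stabilizer} converts the required inequality $n(g,t)\ge n(g,t+1)$, written with $t=g-\ell$, into the purely combinatorial monotonicity $|\mathcal V(\ell-1)|\le |\mathcal V(\ell)|$ of the stable gap-vector counts. I would prove this by an explicit injection $\mathcal V(\ell-1)\hookrightarrow \mathcal V(\ell)$ on balanced $(0/1)$-vectors that raises the cotype by exactly one: given $v\in\{0,1\}^{2\ell-2}$ with $cotype(v)=\ell-1$, form $\widehat v=(0,v_1,\dots,v_{2\ell-2},1)\in\{0,1\}^{2\ell}$ and check, via the difference-set description in Proposition~\ref{prop:type}, that exactly one new difference appears, so that $cotype(\widehat v)=\ell$. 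The growth of $|\mathcal V(\ell)|$ recorded in Theorem~\ref{thm:main2} is consistent with such monotonicity.

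The difficult core is the intermediate band $t^\ast(g)\le t<(2g-1)/3$ together with the whole increasing side $1\le t<t^\ast(g)$, where $\ell=g-t$ is large enough that the hypothesis $g\ge 3\ell-1$ of Proposition~\ref{prop:type} fails; there the clean gap-vector model breaks down, $\mathcal V(\ell)$ no longer counts $n(g,g-\ell)$, and no finite model independent of $g$ is available (recall that $F(\Lambda)$ may be as large as $2g-t$ by Proposition~\ref{fgt-bound}). In this range I would attempt genus-preserving swaps directly in $\mathcal T$: exchange a suitable element of $\Lambda$ for a gap so that the genus is unchanged while the set $PF(\Lambda)=\mathrm{Maximals}_{\le_\Lambda}(\Z\setminus\Lambda)$ gains or loses exactly one maximal element, controlling the effect with the parent--child relations of Proposition~\ref{prop11} and the partial order $\le_\Lambda$. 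The main obstacle is precisely this control: the type is a global invariant of the $\le_\Lambda$-poset on the gaps, and a single swap can alter the number of $\le_\Lambda$-maximal gaps in an unpredictable way, so engineering a move that is simultaneously genus-preserving, type-shifting by exactly one, and injective is the crux of the conjecture.

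A complementary route, should the explicit swaps prove intractable, is induction on $g$ through the tree recursion. One writes $n(g,t)$ as a sum over the parents of genus $g-1$, weighted by the number of children of each type --- a distribution constrained by Proposition~\ref{prop11}, which bounds a child's type above by one more than its parent's type --- and tries to show that this convolution-like operation preserves unimodality. Since unimodality is fragile under such sums, I would aim instead at the stronger property of log-concavity of $t\mapsto n(g,t)$, which behaves far better under the operations a tree recursion produces; proving that the recursion propagates log-concavity from level $g-1$ to level $g$ would be the decisive, and currently open, step.
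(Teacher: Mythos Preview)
The statement under discussion is a \emph{conjecture}: the paper does not prove it, it only verifies it computationally for $g\le 33$ via Table~\ref{table1}. So there is no ``paper's own proof'' to compare against, and your text is, appropriately, a research strategy rather than a proof. You are explicit about this (``the decisive, and currently open, step''), and the general architecture you propose --- injections between adjacent type classes, induction through the tree with log-concavity as a carrier --- is reasonable as a plan.

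However, the one place where you assert a concrete verifiable step is wrong. For the stable tail you claim that the map
\[
\mathcal V(\ell-1)\ni v \;\longmapsto\; \widehat v=(0,v_1,\dots,v_{2\ell-2},1)\in\{0,1\}^{2\ell}
\]
raises the cotype by exactly one. It does not. Take $\ell=3$ and $v=(0,1,0,1)\in\mathcal V(2)$, which has difference set $\{1,3\}$ and $cotype(v)=2$. Then $\widehat v=(0,0,1,0,1,1)$ has zeros at positions $1,2,4$ and ones at $3,5,6$, giving difference set $\{1,2,3,4,5\}$ and $cotype(\widehat v)=5\neq 3$. The prepended $0$ creates a new pair with \emph{every} $1$ in $v$ (and the appended $1$ a new pair with every $0$), so many new differences can appear simultaneously. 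Thus $\widehat v\notin\mathcal V(\ell)$ in general, and the ``exactly one new difference'' claim fails. The monotonicity $|\mathcal V(\ell-1)|\le|\mathcal V(\ell)|$ is consistent with Table~\ref{table:2}, but it is not established by this map; finding a correct injection already looks like a nontrivial combinatorial problem, and the rest of your outline is, as you acknowledge, still further from a proof.
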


Let $n(g)$ be the number of numerical semigroups of genus $g$. There is a great deal of literature to understand the asymptotic behavior of the sequence $n(g)$. Bras-Amor\'os \cite{BA} conjectured that 
$$\hbox{for each $g\ge 1, \ n(g-2)+n(g-1)\le n(g)$.}$$

In fact, the following far weaker version of the conjecture is still open
$$\hbox{for each $g\ge 1, \ n(g)< n(g+1)$.}$$

On this direction, we put forward the following type-version

\begin{conj} For each $2\le t\le g$, $n(g,t)<n(g+1,t)$.
\end{conj}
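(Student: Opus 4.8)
The statement is a strict monotonicity in $g$ of $n(g,t)=|\mathcal L(g,t)|$ at fixed $t\ge 2$, so the plan is to construct an injection $\Phi\colon \mathcal L(g,t)\hookrightarrow \mathcal L(g+1,t)$ and then argue that it is not surjective. The building block is the elementary observation that if $a$ is any minimal generator of a numerical semigroup $\Lambda$, then $\Lambda\setminus\{a\}$ is again a numerical semigroup: being a minimal generator, $a$ is not the sum of two nonzero elements of $\Lambda$, so $\Lambda\setminus\{a\}$ stays closed under addition, and its genus is $g(\Lambda)+1$. Moreover $a\in PF(\Lambda\setminus\{a\})$, since $a+\lambda\in\Lambda\setminus\{a\}$ for every $\lambda\in\Lambda\setminus\{0\}$. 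I stress that $a$ is allowed to be smaller than $F(\Lambda)$; admitting all minimal generators, not only the effective ones exceeding $F(\Lambda)$ which give the children in $\mathcal T$, is essential, because the leaves of $\mathcal T$ (for instance $\langle 4,5,6\rangle$) have no generator above their Frobenius number and yet must be assigned an image.

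The first step is to understand how the type changes under $\Gamma:=\Lambda\setminus\{a\}$, where $Gaps(\Gamma)=Gaps(\Lambda)\cup\{a\}$. The bookkeeping I expect is: (1) $a\in PF(\Gamma)$, as above; (2) a number $f\in PF(\Lambda)$ remains pseudo-Frobenius in $\Gamma$ if and only if $a-f\notin\Lambda$, the only new way for $f$ to lose maximality being the relation $f\le_\Lambda a$ in $\Gamma$, which holds exactly when $a-f\in\Lambda\setminus\{0\}$; and (3) the only further candidates for $PF(\Gamma)$ are gaps $x\in Gaps(\Lambda)\setminus PF(\Lambda)$ whose every additive route into $Gaps(\Lambda)$ passed through $a$ and which are not dominated by the new gap $a$. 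Writing $\delta(a)$ for the number of promotions in (3), this gives
\[
t(\Gamma)=t(\Lambda)+1-\bigl|\{f\in PF(\Lambda): a-f\in\Lambda\}\bigr|+\delta(a).
\]
The goal is to choose $a$ so that exactly one old pseudo-Frobenius number is absorbed and $\delta(a)=0$, producing $t(\Gamma)=t(\Lambda)$; one then defines $\Phi(\Lambda)=\Lambda\setminus\{a\}$ for a canonical (say, largest) such generator $a$.

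The crux, and the step I expect to be the main obstacle, is to prove that a type-preserving generator always exists. One natural route is to split into regimes: when $t$ is close to $g$ one has $g\le(3t+1)/2$ and the gap-vector calculus of Proposition~\ref{prop:type} and Theorem~\ref{thm:main} is available, so the claim should reduce to a statement about the $cotype$ of the associated $(0/1)$-vector; when $t$ is small relative to $g$ one can hope to remove a generator exceeding $F(\Lambda)$ (a genuine child), where the interval structure forces $\delta(a)=0$ and a single absorption. Unifying the two regimes and, above all, ruling out $\delta(a)>0$ or multiple absorptions for a well-chosen $a$ in full generality is where the difficulty concentrates; this is exactly the content that keeps the statement at the level of a conjecture. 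Two reductions that may tame $\delta(a)$ are to take $a$ maximal among the minimal generators, or to take $a$ minimizing $|\{f\in PF(\Lambda):a-f\in\Lambda\}|$ subject to the removal still fixing the type.

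Finally, injectivity and strictness. Injectivity follows once $a$ is recoverable from $\Gamma=\Phi(\Lambda)$: we have $\Lambda=\Gamma\cup\{a\}$ with $a\in PF(\Gamma)$, so it suffices that $a$ be singled out among the $p\in PF(\Gamma)$ for which $\Gamma\cup\{p\}$ lies in $\mathcal L(g,t)$ and applying the forward construction to $\Gamma\cup\{p\}$ returns $\Gamma$; a largest-first selection rule makes this reconstruction unambiguous, whence $\Phi$ is injective and $n(g,t)\le n(g+1,t)$ unconditionally. For the strict inequality one must exhibit a member of $\mathcal L(g+1,t)$ outside the image of $\Phi$, i.e.\ a $\Gamma$ whose admissible adjunctions $\Gamma\cup\{p\}$, $p\in PF(\Gamma)$, all have type $\neq t$ (for instance a $\Gamma$ arising from a type-$(t+1)$ semigroup of genus $g$ by a type-dropping removal, using Proposition~\ref{prop11}). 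Here the hypothesis $t\ge 2$ is decisive: for $t=1$ the injection is already a bijection at the bottom of the tower (indeed $n(1,1)=n(2,1)=1$), and likewise $\Phi$ restricts to a bijection on the staircase at $g=t$, where $n(t,t)=n(t+1,t)=1$; so strictness is genuinely a phenomenon of $g>t\ge 2$, to be certified by producing at least one such witness for every $g>t$.
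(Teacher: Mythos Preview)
The statement you are attempting to prove is presented in the paper as a \emph{conjecture}; the paper offers no proof, only the computational evidence of Table~\ref{table1}. So there is no paper proof to compare against, and the relevant question is whether your proposal actually closes the gap.

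It does not, and you essentially say so yourself: the existence, for every $\Lambda\in\mathcal L(g,t)$, of a minimal generator $a$ whose removal preserves the type is the whole difficulty, and your sketch does not establish it. Worse, your phrasing ``whence $\Phi$ is injective and $n(g,t)\le n(g+1,t)$ unconditionally'' is false as written. For $t=1$ the paper records $n(22,1)=546>498=n(23,1)$, so no injection $\mathcal L(22,1)\hookrightarrow\mathcal L(23,1)$ exists at all; in particular there are symmetric semigroups admitting no type-preserving generator removal. Any correct argument must therefore use the hypothesis $t\ge 2$ in an essential way precisely at the step where you assert existence of a type-preserving $a$, and nothing in your outline explains where $t\ge 2$ enters. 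The split into the regime $g\le(3t+1)/2$ (where Theorem~\ref{thm:main} already gives a bijection, hence no strict inequality) and the complementary regime does not help: the hard cases are exactly those outside the stable range, and there your sketch offers only a hope that removing a generator above $F(\Lambda)$ works, with no verification that such a generator exists or that it preserves the type.

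There is also a boundary issue with the conjecture itself that you partly noticed but did not follow through: for $g=t\ge 2$ one has $n(t,t)=1=n(t+1,t)$ by Proposition~\ref{gist} and Proposition~\ref{propo:1}(b), so the strict inequality fails at $g=t$. Any attempted proof must therefore at least restrict to $g>t$, and your strictness discussion should be read with that correction in mind.
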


Table \ref{table1} supports this conjecture for each $2\le t\le 33$. Notice that the sequence $n(g,1)$ (corresponding to symmetric semigroups) is not increasing, for instance, $n(22,1)>n(23,1)$.
\smallskip

Let $v_i(\Lambda)$ be the number of descendants of $\Lambda$ of type $i$. One may wonder whether the sequence $v_i(\Lambda)$ is increasing (or unimodal). The latter is not true in general. Indeed, consider $\Lambda=\langle 7,9,10,12,13,15\rangle$ with $t(\Lambda)=5, g(\Lambda)=8$ and $F(\Lambda)=11$. The descendants of $\Lambda$ are :
$$\hbox{$\Lambda'=\Lambda\setminus\{12\}= \langle 7,9,10,13,15 \rangle$ with $t(\Lambda')=4$,}$$
$$\hbox{$\Lambda''=\Lambda\setminus\{13\}= \langle 7,9,10,12,15 \rangle$ with $t(\Lambda'')=4$,}$$ 
$$\hbox{$\Lambda'''=\Lambda\setminus\{15\}= \langle 7,9,10,12,13 \rangle$ with $t(\Lambda''')=2$.}$$ 

We thus have that $v_1(\Lambda)=0,v_2(\Lambda)=1,v_3(\Lambda)=0$ and $v_4(\Lambda)=2$.
\smallskip

Moreover, if $\mathcal{F}_{g,t}$ denotes the family of numerical semigroups of type $t$ and genus $g$ then 
the sequence $w_i(\mathcal{F}_{g,t})=\sum\limits_{\Lambda\in\mathcal{F}_{g,t}} v_i(\Lambda)$ is not necessarily either increasing or unimodal, for instance, $w_1(\mathcal{F}_{8,3})=1, w_2(\mathcal{F}_{8,3})=1, w_3(\mathcal{F}_{8,3})=3, w_4(\mathcal{F}_{8,3})=2, w_5(\mathcal{F}_{8,3})=4,
w_6(\mathcal{F}_{8,3})=5, w_7(\mathcal{F}_{8,3})=0, w_8(\mathcal{F}_{8,3})=0$ and $w_9(\mathcal{F}_{8,3})=0$.

\subsection{Leaves}\label{subsec:leaves}

In  \cite{BA-B,BB}  the infinite chains in $\mathcal T$ were investigated. For the latter, it is thus natural to study the nature of the semigroups that are leaves of $\mathcal T$.
The semigroups $\langle 2,2n+1\rangle, n\ge 1$ are symmetric (or equivalently of type 1). They are called {\em hyperelliptic} semigroups. In \cite[Lemma 3]{BA-B} was proved that non-hyperelliptic symmetric semigroups are leaves. Nevertheless, there are non-symmetric semigroups that are also leaves. {The first non-symmetric leaves in $\mathcal T$ are the genus 6 semigroups $\Lambda_1=\langle 5,6,8\rangle, \Lambda_2=\langle5,6,7\rangle$ and $\Lambda_3=\langle4,7,9\rangle$ of type 2 (having $\{7,9\}, \{8,9\}$ and $\{5,10\}$ as pseudo-Frobenius numbers respectively). The other 5 leaves of genus 6 are type 1 (symmetric).}
\smallskip

On this direction, we study the type of semigroups that are leaves. Let $\Lambda$ be a numerical semigroup of genus $g$. If $\Lambda$ is a leaf, what can we say about $t(\Lambda)$? 
Our calculations outcome the values in Table \ref{table:3}

\begin{table}[htbp]
\centering
{\scriptsize
\begin{tabular}{c|c|c|c|c|c|c|c|c|c|c|c|c|c|c|c|c|c|c|c|c|c|c|c|c}
$g$  & 7 &  8 &  9 &  10 &  11 &  12 &13 & 14 &  15 & 16 &  17 &  18 &  19 &  20 &  21 &  22 &  23 &  24 &  25 &  26 &  27 &  28 &  29 & 30 \\  
\hline
$t\le$ & $3$ &  $3$ &  $3$ &  $4$ &  $5$ &  $5$ &  $5$ &  $6$ &  $7$ &  $7$ & $7$ &  $8$ &  $9$ &  $9$ &  $9$ &  $10$ &  $11$ &  $11$ &  $11$ &  $12$ &  $13$ &  $13$ &  $13$ & $14$\\
\end{tabular}
}
\caption{Bounds on the type $t$ of semigroups of genus $g$ that are leaves.}
\label{table:3}
\end{table} 

These values support the following

\begin{conj} Let $g\ge 2$ be an integer such that $g=4q+r, \ 0\le r<4, \ q\ge 0$. If a numerical semigroup $\Lambda$ of genus $g$ is a leaf then
$$t(\Lambda)\le \left\{
\begin{array}{ll}
2q-1 & \text{ if } r=0,1,\\
2q & \text{ if } r=2,\\
2q+1 & \text{ if } r=3.\\
\end{array}\right.$$
\end{conj}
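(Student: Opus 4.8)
The plan is to translate the \emph{leaf} condition into an additive statement about the elements of $\Lambda$ just below its Frobenius number, and then to convert the resulting rigidity into an upper bound for the type by refining the injection used in Proposition~\ref{fgt-bound}. Write $F=F(\Lambda)$, $m=m(\Lambda)$, $t=t(\Lambda)$ and $A=\Lambda\cap[1,F]$, so that $|A|=F-g$. First I would record the standard fact that every minimal generator of $\Lambda$ exceeding $F$ lies in $(F,F+m]$: if $x>F+m$ then $x-m>F$ is in $\Lambda$, so $x=m+(x-m)$ is decomposable. Hence $\Lambda$ is a leaf if and only if none of $F+1,\dots,F+m$ is a minimal generator, i.e.\ each is a sum of two nonzero elements of $\Lambda$. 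For $1\le j\le m$ both summands are forced into $A$, so the leaf condition becomes exactly the covering statement $\{F+1,\dots,F+m\}\subseteq A+A$.

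As a warm-up I would derive the weak bound $t\le(2g-1)/3$. Since $m\ge t+1$ (\cite[Corollary 2.23]{RG}) and $F+1\in A+A$ forces $F+1\ge 2m\ge 2t+2$, we get $F\ge 2t+1$; together with $F+t\le 2g$ from Proposition~\ref{fgt-bound} this gives $3t\le 2g-1$. This shows the type of a leaf is linearly bounded in $g$, but with the wrong slope $2/3$ rather than the conjectured $\approx 1/2$, so the real work is to improve the constant while respecting the residue of $g$ modulo $4$.

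To sharpen the estimate I would revisit the map $\varphi(x)=F-x$ of Proposition~\ref{fgt-bound}. Restricting the involution $f\mapsto F-f$ to $PF(\Lambda)\setminus\{F\}$ sends pseudo-Frobenius numbers to gaps, and it embeds into $Gaps(\Lambda)\setminus PF(\Lambda)$ except on the \emph{symmetric} pairs $\{f,F-f\}\subseteq PF(\Lambda)$; this yields $2t\le g+1+|S|$, where $S$ is the set of such symmetric pseudo-Frobenius numbers. The crux is to show that the covering $\{F+1,\dots,F+m\}\subseteq A+A$ severely restricts $S$: morally, a symmetric pair $f,F-f\in PF(\Lambda)$ obstructs a decomposition $F+j=a+b$ and thereby forces $F+j$ to be an effective generator, contradicting leaf-ness. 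This is exactly what happens for $\langle 4,6,11,13\rangle$, whose pair $2,7\in PF$ produces the child generator $F+2=11$. Quantifying this obstruction, and combining it with a lower bound on $F$ coming from the \emph{full} covering of $[F+1,F+m]$ by $A+A$ rather than from $F+1$ alone, should remove the extra slack and, after a parity analysis around the midpoint $F/2$, produce the four residue-dependent constants.

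The hard part, and the reason the statement is only a conjecture, is this last quantitative step. The weak bound uses the covering at a single point, whereas the true bound seems to require the \emph{simultaneous} covering of all of $F+1,\dots,F+m$, which is a genuine additive constraint on the set $A\subseteq[m,F]$ of size $F-g$. Controlling how densely $A$ must sit in its top half, and translating that density into a precise count of pseudo-Frobenius numbers (including the delicate $g\equiv 1\pmod 4$ correction), is exactly where a complete argument is still missing. I expect the proof to be accompanied by explicit extremal leaves, such as $\langle 5,6,8\rangle$ for $g=6$, confirming tightness of each of the four cases.
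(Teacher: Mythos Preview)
The statement you are attempting is labelled a \emph{Conjecture} in the paper and is not proved there; the authors only offer the computational evidence of Table~\ref{table:3} (and implicitly Table~\ref{table2}) in its support. So there is no ``paper's own proof'' to compare against, and your write-up is best read as a proposed strategy toward an open problem rather than an alternative proof.

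On the substance of your approach: the reformulation of the leaf condition as $\{F+1,\dots,F+m\}\subseteq A+A$ is correct and useful, and your warm-up bound $t\le(2g-1)/3$ is valid. The refinement, however, does not yet reach the conjectured threshold. Combining your two ingredients---the symmetric-pair inequality, which in its sharpest form (using $\varphi$ on both $A$ and $PF\setminus\{F\}$) reads $F+2t\le 2g+1+|S|$, together with $F\ge 2m-1\ge 2t+1$---yields $4t\le 2g+|S|$. Even in the ideal case $|S|=0$ this gives only $t\le\lfloor g/2\rfloor$, which matches the conjecture for $g\equiv 3\pmod 4$ but is off by one unit in the other three residue classes (e.g.\ $g=8$ gives $t\le 4$ rather than the conjectured $t\le 3$). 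So bounding $|S|$ alone cannot close the gap; one genuinely needs a sharper lower bound on $F$ than $F\ge 2m-1$, extracted from the \emph{full} sumset covering $[F+1,F+m]\subseteq A+A$ with $|A|=F-g$, and it is not clear what the right additive-combinatorial statement is. Your diagnosis that this is ``exactly where a complete argument is still missing'' is accurate---and is precisely why the authors left it as a conjecture.
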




Let $\ell(g,t)$ be the number of semigroups of genus $g$ and type $t$ that are leaves.

\begin{conj} Let $1\le t\le g$ be integers. Then, the sequence $\ell(g,1), \ell(g,2),\dots ,\ell(g,g)$ is unimodal.
\end{conj}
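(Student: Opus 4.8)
The plan is to first reduce the conjecture to a finite range of types, where the tail can be shown to vanish rigorously, and then to attack the single-peak property by type-shifting surgeries on leaves.

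\emph{Step 1: locating the support of $\ell(g,\cdot)$.} I would begin by recording a usable characterization of leaves. Since any sum of two nonzero elements of $\Lambda$ is at least $2m(\Lambda)$, and every integer exceeding $F(\Lambda)$ already lies in $\Lambda$, an element $x>F(\Lambda)$ is a minimal generator precisely when $F(\Lambda)<x\le F(\Lambda)+m(\Lambda)$ and $x$ is not a sum of two positive elements of $\Lambda$. Hence $\Lambda$ is a leaf if and only if no integer in the window $(F(\Lambda),F(\Lambda)+m(\Lambda)]$ is a minimal generator. Now combine $m(\Lambda)\ge t(\Lambda)+1$ (\cite[Corollary 2.23]{RG}) with $F(\Lambda)\le 2g-t$ (Proposition \ref{fgt-bound}): whenever $3t\ge 2g$ we get $F(\Lambda)+1\le 2g-t+1\le 2t+1\le 2m(\Lambda)-1<2m(\Lambda)$, so $F(\Lambda)+1$ cannot be a sum of two positive elements and is therefore a minimal generator exceeding $F(\Lambda)$. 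Thus $\Lambda$ has the descendant $\Lambda\setminus\{F(\Lambda)+1\}$ and is not a leaf. This proves $\ell(g,t)=0$ for every $t\ge\lceil 2g/3\rceil$, so the tail of the sequence is identically zero and unimodality reduces to showing that $\ell(g,1),\dots,\ell(g,t_0)$ rises to a single peak, where $t_0=\lceil 2g/3\rceil-1$. It also explains why the stabilizer machinery of Section \ref{sec:stab} is of no help here: leaves live strictly below the stabilization threshold $t\ge(2g-1)/3$.

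\emph{Step 2: type-shifting injections.} For the peak I would try to build, for each $t$ below the (conjectural) maximizing type $t^{\ast}(g)\approx g/2$, an injection from the leaves in $\mathcal{L}(g,t)$ into the leaves in $\mathcal{L}(g,t+1)$, and for $t\ge t^{\ast}(g)$ an injection in the opposite direction. The natural surgery is a local modification of the gap set near $F(\Lambda)$ that changes $|PF(\Lambda)|$ by exactly one while preserving the genus, the Frobenius number, and leafness: one swaps a suitably chosen pair consisting of a small element of $\Lambda$ and a gap so as to turn one non--pseudo-Frobenius gap into a pseudo-Frobenius gap (or conversely), and the window characterization of Step 1 supplies a finite checklist for verifying that no minimal generator is created above $F(\Lambda)$. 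In practice I would first calibrate candidate maps against the GAP computations behind Table \ref{table:3}, pin down $t^{\ast}(g)$ empirically, and only then attempt to prove injectivity together with leaf-preservation.

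\emph{Main obstacle.} The crux is that the two quantities being balanced are governed by different, strongly coupled, features of $\Lambda$: the type depends only on the maximal gaps (the set $PF(\Lambda)$), whereas leafness is a global additive condition on the entire window $(F(\Lambda),F(\Lambda)+m(\Lambda)]$. A surgery that cleanly adds or removes one pseudo-Frobenius number will generically create or destroy a minimal generator above $F(\Lambda)$, and so simultaneously controlling genus, type (by exactly $\pm1$), and leafness is precisely where the argument is expected to break. Compounding this, the peak location $t^{\ast}(g)$ is itself only conjectural, so even the direction of the desired injection is not known a priori; one would need at least the type bound of the preceding conjecture, or the weaker but rigorous vacuity $\ell(g,t)=0$ for $3t\ge 2g$ from Step 1, to delimit where each injection must point. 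An alternative route, proving log-concavity of $\ell(g,\cdot)$ through a real-rooted generating function, appears out of reach at present, since no product or recursive structure on leaves counted by type is visible.
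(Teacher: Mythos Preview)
The statement you are addressing is a \emph{conjecture} in the paper, not a theorem: the paper offers no proof whatsoever, only the computational evidence in Table~\ref{table2} for $g\le 33$. So there is no ``paper's own proof'' to compare against, and your proposal should be read as an outline of a possible attack on an open problem rather than as a reconstruction.

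With that framing, your Step~1 is correct and is a genuine (if modest) contribution beyond the paper: the argument that $F(\Lambda)+1<2m(\Lambda)$ whenever $3t\ge 2g$, and hence that $F(\Lambda)+1$ is a minimal generator above $F(\Lambda)$, is sound and does establish $\ell(g,t)=0$ for $t\ge\lceil 2g/3\rceil$. This is weaker than the bound suggested by Table~\ref{table:3} (roughly $t\lesssim g/2$) but it is the first rigorous tail estimate and it does reduce unimodality to a finite initial segment.

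Step~2, however, is not a proof and you acknowledge as much. No concrete swap map is defined, no injectivity is checked, and the ``main obstacle'' paragraph is an honest admission that the coupling between $PF(\Lambda)$ and the generator window is exactly what makes any such surgery fragile. One factual slip: you write $t^{\ast}(g)\approx g/2$, but the data in Table~\ref{table2} place the peak of $\ell(g,\cdot)$ near $g/4$ (e.g.\ $t^{\ast}=5$ for $g=20$, $t^{\ast}=7$ for $g=30$, $t^{\ast}=8$ for $g=33$), mirroring the behaviour of $n(g,\cdot)$. This matters because your injection scheme hinges on knowing, or at least bounding, where the peak sits. In short: Step~1 is a clean lemma worth keeping; Step~2 remains a wish list, and the conjecture stays open.
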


Table \ref{table2} supports this conjecture for each $1\le g\le 33$.
\smallskip

Let $\ell(g)$ be the number of semigroups of genus $g$ that are leaves. It is intriguing that the ratio $\frac{\ell(g)}{n(g)}$ increases very slowly as $g$ grows, see Table \ref{table3}.

\begin{prob} Investigate the asymptotic behavior of $\frac{\ell(g)}{n(g)}$. More precisely, is it true that $\lim\limits_{g\rightarrow \infty} \frac{\ell(g)}{n(g)}$  {exists}? More ambitious, is it strictly less than $\frac{1}{2}$ ?
\end{prob}

{A similar problem, concerning numerical semigroups $\Lambda$ with $F(\Lambda)\le 3m(\Lambda)$, is mentioned in
\cite[Conjecture 4.1]{Z}; see also \cite[Conjecture 6]{Ka1}.
\medskip

It might be interesting either to use other algorithms in the literature (for instance the depth-first search, possibly parallel, algorithm given in \cite{FH}) or, {alternatively, searching specific} implementations  of the existing semigroup packages to push much further the above computations.}

{\scriptsize
\begin{table}[htbp]
\centering{
\begin{tabular}{|c|c|c|c|}
\hline
$g$ & $\ell(g)$ & $n(g)$ & $\ell(g)/n(g)$ \\
\hline
1 & 0 & 1 & 0 \\
2 & 0 & 2 & 0 \\
3 & 1 & 4 & 0,25 \\
4 & 2 & 7 & 0,2857 \\
5 & 2 & 12 & 0,1667 \\
6 & 8 & 23 & 0,3478 \\
7 & 12 & 39 & 0,3077 \\
8 & 20 & 67 & 0,2985 \\
9 & 37 & 118 & 0,3136 \\
10 & 72 & 204 & 0,3529 \\
11 & 110 & 343 & 0,3207 \\
12 & 211 & 592 & 0,3564 \\
13 & 350 & 1001 & 0,3497 \\
14 & 590 & 1693 & 0,3485 \\
15 & 1021 & 2857 & 0,3574 \\
16 & 1742 & 4806 & 0,3625 \\
17 & 2901 & 8045 & 0,3606 \\
18 & 4927 & 13467 & 0,3659 \\
19 & 8244 & 22464 & 0,3670 \\
20 & 13751 & 37396 & 0,3677 \\
21 & 22959 & 62194 & 0,3692 \\
22 & 38356 & 103246 & 0,3715 \\
23 & 63560 & 170963 & 0,3718 \\
24 & 105479 & 282828 & 0,3729 \\
25 & 174673 & 467224 & 0,3739 \\
26 & 288699 & 770832 & 0,3745 \\
27 & 476507 & 1270267 & 0,3751 \\
28 & 786075 & 2091030 & 0,3759 \\
29 & 1293751 & 3437839 & 0,3763 \\
30 & 2127773 & 5646773 & 0,3768 \\
31 & 3495791 & 9266788 & 0,3772 \\
32 & 5738253 & 15195070 & 0,3776 \\
33 & 9409859 & 24896206 & 0,3780 \\
\hline
\end{tabular}
}
\caption{Values of $\ell(g)$, $n(g)$ and $\ell(g)/n(g)$ for each $1\le g\le 33$.}\label{table3}
\end{table} 
}


\begin{table}[htbp]
\centering
\rotatebox{90}{
\resizebox{\textheight}{!}{
\begin{tabular}{c|c|c|c|c|c|c|c|c|c|c|c|c|c|c|c|c|c|c|c|c|c|c|c|c|c|c|c|c|c|c|c|c|c|c|}
\diagbox[width=5em]{genus}{type} & 1 &  2 & 3& 4& 5& 6& 7& 8&9 &10 &11 &12 &13 &14 &15 &16 &17 &18 &19 &20 &21 &22 &23 &24 &25 &26 &27 &28 &29 &30 &31 &32  &33\\ 
\toprule
1& \circled{1} & & & & & & & & & & & & & & & & & & & & & & & & & & & & & & & & \\ 
2& \circled{1}& {\bf 1} & & & & & & & & & & & & & & & & & & & & & & & & & & & & & & & \\																	
3& \circled{2} &{\bf 1} & 1& & & & & & & & & & & & & & & & & & & & & & & & & & & & & & \\																
4&\circled{3}& 2 &{\bf 1} & 1& & & & & & & & & & & & & & & & & & & & & & & & & & & & &\\						
5&3 & \circled{4}& {\bf 3} &{\bf 1}&1& & & & & & & & & & & & & & & & & & & & & & & & & & & & \\
6&\circled{6} & 6	 & 6	&{\bf 3} &{\bf 1} &1& & & & & & & & & & & & & & & & & & & & & & & & & & &\\																
7&8 & 9	 &\circled{12}	&5&	{\bf 3} &{\bf 1}	&1& & & & & & & & & & & & & & & & & & & & & & & & & & \\													
8&7& 17& \circled{20}&11&{\bf 7}&{\bf 3} &{\bf 1}&1& & & & & & & & & & & & & & & & & & & & & & & & &\\																	
9&15&21&\circled{35}&21&14&{\bf 7}&{\bf 3} &{\bf 1}&1& & & & & & & & & & & & & & & & & & & & & & & &\\																	
10&20&32&\circled{56}&40&32&12&{\bf 7}&{\bf 3} &{\bf 1}&1& & & & & & & & & & & & & & & & & & & & & & &\\																
11&18&47&\circled{94}&69&60&28&{\bf 15}&{\bf 7}&{\bf 3} &{\bf 1}&1& & & & & & & & & & & & & & & & & & & & & &\\																
12&36&63&\circled{136}&132&108&57&33&{\bf 15}&{\bf 7}&{\bf 3} &{\bf 1}&1& & & & & & & & & & & & & & & & & & & & &\\
13&44&88&\circled{217}&210&202&111&70&32&{\bf 15}&{\bf 7}&{\bf 3} &{\bf 1}&1& & & & & & & & & & & & & & & & & & & &\\															
14&45&136&	322&	343&\circled{361}&219&	137&	68&{\bf 35}&{\bf 15}&{\bf 7}&{\bf 3} &{\bf 1}&1& & & & & & & & & & & & & & & & & & &\\									
15&83&159&	470&	565&	\circled{619}&405&	289&	130&	75&{\bf 35}&{\bf 15}&{\bf 7}&{\bf 3} &{\bf 1}&1& & & & & & & & & & & & & & & & & &\\							
16&109&227&672&	897&	\circled{1039}&754&533	&289	&152&72&{\bf 35}&{\bf 15}&{\bf 7}&{\bf 3} &{\bf 1}&1& & & & & & & & & & & & & & & & &\\								
17&101&329&	1025&1309&\circled{1730}&1364&	1026&546&328	&147	&{\bf 78}&{\bf 35}&{\bf 15}&{\bf 7}&{\bf 3} &{\bf 1}&1& & & & & & & & & & & & & & & &\\								
18&174&413&1376&2090&\circled{2778}&2392&	1879	&1103&651&312&159&{\bf 78}&{\bf 35}&{\bf 15}&{\bf 7}&{\bf 3} &{\bf 1}&1& & & & & & & & & & & & & & &\\									
19&246&562&	1938	&3067&\circled{4441}&4087&3426	&2122&1298&626&358&153&{\bf 78}&{\bf 35}&{\bf 15}&{\bf 7}&{\bf 3} &{\bf 1}&1& & & & & & & & & & & & & &\\							
20&227&	812&	2810&4422&\circled{6967}&6909&	6024&4043&2523&1307&	709&	342&	{\bf 161}&{\bf 78}&{\bf 35}&{\bf 15}&{\bf 7}&{\bf 3} &{\bf 1}&1& & & & & & & & & & & & &\\				21&420&948&3798&6706&10613&\circled{11318}&10684&7399&4871&2617&1474&684&361&{\bf 161}&{\bf 78}&{\bf 35}&{\bf 15}&{\bf 7}&{\bf 3} &{\bf 1}&1& & & & & & & & & & & &\\			
22&546&1331&5179&9644&16265&\circled{18321}&18134&13642&9226&5168&2956&1440&733&360&{\bf 161}&{\bf 78}&{\bf 35}&{\bf 15}&{\bf 7}&{\bf 3} &{\bf 1}&1& & & & & & & & & & &\\										
23&498&	1827&7463&13360&	24522&29297&	\circled{30564}&24245&	17344&10052&	5927&2922&1546&728&{\bf 367}&{\bf 161}&{\bf 78}&{\bf 35}&{\bf 15}&{\bf 7}&{\bf 3} &{\bf 1}&1& & & & & & & & & &\\								
24&926&	2200&9672&19844&	36037&45754&	\circled{50845}&42662&	31726&19539&	11522&5966&3198&	1519&750&{\bf 367}&{\bf 161}&{\bf 78}&{\bf 35}&{\bf 15}&{\bf 7}&{\bf 3} &{\bf 1}&1& & & & & & & & &\\		
25&1182&3031&13210&	27564&53279&	70929&\circled{82681}&	73752&57493&	36975&22589&	11895&6476&3151&	1607&742&{\bf 367}&{\bf 161}&{\bf 78}&{\bf 35}&{\bf 15}&{\bf 7}&{\bf 3} &{\bf 1}&1& & & & & & & &\\					
26&1121&4207&18579&37735&77889&109057&\circled{132168}&125194&102947&69067&43209&23899&12970&6457&3323&1585&{\bf 757}&{\bf 367}&{\bf 161}&{\bf 78}&{\bf 35}&{\bf 15}&{\bf 7}&{\bf 3} &{\bf 1}&1& & & & & & &\\				

27&2015	&4874&24100&	54527&111716&163904&\circled{211078}&208858&180140&	127562&82539&46611&26054&13089&6892&3258&1625&{\bf 757}&{\bf 367}&{\bf 161}&{\bf 78}&{\bf 35}&{\bf 15}&{\bf 7}&{\bf 3} &{\bf 1}&1&&&&&&\\

28&2496&6774&32223&	74629&161352&245919&	328811&\circled{344606}&311816&231314&155044&90898&	51330&26707&	13900&6767&3407&	1612&{\bf 757}&{\bf 367}&{\bf 161}&{\bf 78}&{\bf 35}&{\bf 15}&{\bf 7}&{\bf 3} &{\bf 1}&1&&&&&\\	

29&2436&9096&44777&	99780&230203&366597&	508326&\circled{558563}&531771&413415&288413&174711&100730&53306&28428&13785&7078&3367&{\bf 1632}&{\bf 757}&{\bf 367}&{\bf 161}&{\bf 78}&{\bf 35}&{\bf 15}&{\bf 7}&{\bf 3} &{\bf 1}&1&&&&\\	

30&4350	&10965&56778&142264&	322409&536007&780577&\circled{896660}&890706&730168&528017&	332219&196498&105970&56741&28499&14478&6985&3425&{\bf 1632}&{\bf 757}&{\bf 367}&{\bf 161}&{\bf 78}&{\bf 35}&{\bf 15}&{\bf 7}&{\bf 3} &{\bf 1}&1&&&\\

31&5602&14520&76367&191216&457167&779733&1181200&1419227&\circled{1476568}&1266870&958160&623766&377946&209567&113636&57428&29860&14305&7175&3418&{\bf 1632}&{\bf 757}&{\bf 367}&{\bf 161}&{\bf 78}&{\bf 35}&{\bf 15}&{\bf 7}&{\bf 3} &{\bf 1}&1&&\\

32&5317	&20329&105381&252790&639033&1139142&1763982&2221672&\circled{2415743}&2169709&1709518&1161149&722149&409209&225282&116301&60364&29614&14745&7148&{\bf 3436}&{\bf 1632}&{\bf 757}&{\bf 367}&{\bf 161}&{\bf 78}&{\bf 35}&{\bf 15}&{\bf 7}&{\bf 3} &{\bf 1}&1&\\	

33&8925&23282&133674&355362&882069&1625034&2645417&3435192&\circled{3894557}&3668138&3016976&2127885&1367186&794338&443863&233104&121816&60317&30733&14631&7214&{\bf 3436}&{\bf 1632}&{\bf 757}&{\bf 367}&{\bf 161}&{\bf 78}&{\bf 35}&{\bf 15}&{\bf 7}&{\bf 3} &{\bf 1}&1\\
\end{tabular}
}}
\caption{Values of $n(g,t)$ for each $1\le t\le g\le 33$. {The} largest value in each row is circled, and stability numbers are bolded.}\label{table1}
\end{table} 


\begin{table}[htbp]
\centering
\rotatebox{90}{
\resizebox{\textheight}{!}{
\begin{tabular}{c|c|c|c|c|c|c|c|c|c|c|c|c|c|c|c|c|c|c|c|c|c|c|c|c|c|c|c|c|c|c|c|c|c|c|}
\diagbox[width=5em]{genus}{type} & 1 &  2 & 3& 4& 5& 6& 7& 8&9 &10 &11 &12 &13 &14 &15 &16 &17 &18 &19 &20 &21 &22 &23 &24 &25 &26 &27 &28 &29 &30 &31 &32  &33\\ 
\hline
1 & 0 &  &  &  &  &  &  &  &  &  &  &  &  &  &  &  &  &  &  &  &  &  &  &  &  &  &  &  &  &  &  &  &  \\
2 & 0 & 0 &  &  &  &  &  &  &  &  &  &  &  &  &  &  &  &  &  &  &  &  &  &  &  &  &  &  &  &  &  &  &  \\
3 & 1 & 0 & 0 &  &  &  &  &  &  &  &  &  &  &  &  &  &  &  &  &  &  &  &  &  &  &  &  &  &  &  &  &  &  \\
4 & 2 & 0 & 0 & 0 &  &  &  &  &  &  &  &  &  &  &  &  &  &  &  &  &  &  &  &  &  &  &  &  &  &  &  &  &  \\
5 & 2 & 0 & 0 & 0 & 0 &  &  &  &  &  &  &  &  &  &  &  &  &  &  &  &  &  &  &  &  &  &  &  &  &  &  &  &  \\
6 & 5 & 3 & 0 & 0 & 0 & 0 &  &  &  &  &  &  &  &  &  &  &  &  &  &  &  &  &  &  &  &  &  &  &  &  &  &  &  \\
7 & 7 & 4 & 1 & 0 & 0 & 0 & 0 &  &  &  &  &  &  &  &  &  &  &  &  &  &  &  &  &  &  &  &  &  &  &  &  &  &  \\
8 & 6 & 11 & 3 & 0 & 0 & 0 & 0 & 0 &  &  &  &  &  &  &  &  &  &  &  &  &  &  &  &  &  &  &  &  &  &  &  &  &  \\
9 & 14 & 14 & 9 & 0 & 0 & 0 & 0 & 0 & 0 &  &  &  &  &  &  &  &  &  &  &  &  &  &  &  &  &  &  &  &  &  &  &  &  \\
10 & 19 & 25 & 26 & 2 & 0 & 0 & 0 & 0 & 0 & 0 &  &  &  &  &  &  &  &  &  &  &  &  &  &  &  &  &  &  &  &  &  &  &  \\
11 & 17 & 37 & 47 & 8 & 1 & 0 & 0 & 0 & 0 & 0 & 0 &  &  &  &  &  &  &  &  &  &  &  &  &  &  &  &  &  &  &  &  &  &  \\
12 & 35 & 56 & 83 & 33 & 4 & 0 & 0 & 0 & 0 & 0 & 0 & 0 &  &  &  &  &  &  &  &  &  &  &  &  &  &  &  &  &  &  &  &  &  \\
13 & 43 & 76 & 148 & 70 & 13 & 0 & 0 & 0 & 0 & 0 & 0 & 0 & 0 &  &  &  &  &  &  &  &  &  &  &  &  &  &  &  &  &  &  &  &  \\
14 & 44 & 123 & 228 & 143 & 51 & 1 & 0 & 0 & 0 & 0 & 0 & 0 & 0 & 0 &  &  &  &  &  &  &  &  &  &  &  &  &  &  &  &  &  &  &  \\
15 & 82 & 146 & 362 & 283 & 141 & 6 & 1 & 0 & 0 & 0 & 0 & 0 & 0 & 0 & 0 &  &  &  &  &  &  &  &  &  &  &  &  &  &  &  &  &  &  \\
16 & 108 & 216 & 540 & 518 & 311 & 45 & 4 & 0 & 0 & 0 & 0 & 0 & 0 & 0 & 0 & 0 &  &  &  &  &  &  &  &  &  &  &  &  &  &  &  &  &  \\
17 & 100 & 312 & 853 & 818 & 656 & 147 & 15 & 0 & 0 & 0 & 0 & 0 & 0 & 0 & 0 & 0 & 0 &  &  &  &  &  &  &  &  &  &  &  &  &  &  &  &  \\
18 & 173 & 400 & 1194 & 1426 & 1242 & 423 & 68 & 1 & 0 & 0 & 0 & 0 & 0 & 0 & 0 & 0 & 0 & 0 &  &  &  &  &  &  &  &  &  &  &  &  &  &  &  \\
19 & 245 & 541 & 1714 & 2256 & 2216 & 1033 & 230 & 8 & 1 & 0 & 0 & 0 & 0 & 0 & 0 & 0 & 0 & 0 & 0 &  &  &  &  &  &  &  &  &  &  &  &  &  &  \\
20 & 226 & 794 & 2535 & 3366 & 3936 & 2159 & 682 & 49 & 4 & 0 & 0 & 0 & 0 & 0 & 0 & 0 & 0 & 0 & 0 & 0 &  &  &  &  &  &  &  &  &  &  &  &  &  \\
21 & 419 & 926 & 3484 & 5339 & 6559 & 4195 & 1813 & 212 & 12 & 0 & 0 & 0 & 0 & 0 & 0 & 0 & 0 & 0 & 0 & 0 & 0 &  &  &  &  &  &  &  &  &  &  &  &  \\
22 & 545 & 1313 & 4838 & 7982 & 10661 & 8041 & 4101 & 789 & 84 & 2 & 0 & 0 & 0 & 0 & 0 & 0 & 0 & 0 & 0 & 0 & 0 & 0 &  &  &  &  &  &  &  &  &  &  &  \\
23 & 497 & 1796 & 7036 & 11291 & 17266 & 14311 & 8642 & 2418 & 294 & 7 & 2 & 0 & 0 & 0 & 0 & 0 & 0 & 0 & 0 & 0 & 0 & 0 & 0 &  &  &  &  &  &  &  &  &  &  \\
24 & 925 & 2179 & 9207 & 17264 & 26553 & 24839 & 17124 & 6252 & 1069 & 62 & 5 & 0 & 0 & 0 & 0 & 0 & 0 & 0 & 0 & 0 & 0 & 0 & 0 & 0 &  &  &  &  &  &  &  &  &  \\
25 & 1181 & 3006 & 12681 & 24497 & 40881 & 41798 & 32492 & 14280 & 3594 & 238 & 25 & 0 & 0 & 0 & 0 & 0 & 0 & 0 & 0 & 0 & 0 & 0 & 0 & 0 & 0 &  &  &  &  &  &  &  &  \\
26 & 1120 & 4178 & 17932 & 34006 & 62198 & 68824 & 58382 & 30926 & 9929 & 1094 & 107 & 3 & 0 & 0 & 0 & 0 & 0 & 0 & 0 & 0 & 0 & 0 & 0 & 0 & 0 & 0 &  &  &  &  &  &  &  \\
27 & 2014 & 4842 & 23402 & 49962 & 91855 & 109834 & 103543 & 61905 & 24560 & 4166 & 408 & 14 & 2 & 0 & 0 & 0 & 0 & 0 & 0 & 0 & 0 & 0 & 0 & 0 & 0 & 0 & 0 &  &  &  &  &  &  \\
28 & 2495 & 6747 & 31462 & 69240 & 136027 & 173885 & 176834 & 118288 & 56267 & 13328 & 1412 & 78 & 12 & 0 & 0 & 0 & 0 & 0 & 0 & 0 & 0 & 0 & 0 & 0 & 0 & 0 & 0 & 0 &  &  &  &  &  \\
29 & 2435 & 9055 & 43844 & 93362 & 199051 & 270115 & 294755 & 219955 & 118964 & 36206 & 5644 & 334 & 31 & 0 & 0 & 0 & 0 & 0 & 0 & 0 & 0 & 0 & 0 & 0 & 0 & 0 & 0 & 0 & 0 &  &  &  &  \\
30 & 4349 & 10936 & 55785 & 134633 & 283435 & 410920 & 483129 & 394911 & 239431 & 89875 & 18821 & 1360 & 186 & 2 & 0 & 0 & 0 & 0 & 0 & 0 & 0 & 0 & 0 & 0 & 0 & 0 & 0 & 0 & 0 & 0 &  &  &  \\
31 & 5601 & 14477 & 75255 & 182259 & 408813 & 616662 & 775025 & 685918 & 464957 & 204831 & 55432 & 5946 & 590 & 22 & 3 & 0 & 0 & 0 & 0 & 0 & 0 & 0 & 0 & 0 & 0 & 0 & 0 & 0 & 0 & 0 & 0 &  &  \\
32 & 5316 & 20294 & 104066 & 242269 & 580380 & 927332 & 1212984 & 1170758 & 866410 & 437022 & 146484 & 22736 & 2041 & 149 & 12 & 0 & 0 & 0 & 0 & 0 & 0 & 0 & 0 & 0 & 0 & 0 & 0 & 0 & 0 & 0 & 0 & 0 &  \\
33 & 8924 & 23235 & 132277 & 342815 & 810110 & 1356021 & 1900504 & 1943772 & 1567615 & 888897 & 353474 & 73589 & 8033 & 507 & 85 & 1 & 0 & 0 & 0 & 0 & 0 & 0 & 0 & 0 & 0 & 0 & 0 & 0 & 0 & 0 & 0 & 0 & 0 \\
\end{tabular}
}}
\caption{Values of $\ell(g,t)$ for each $1\le t\le g\le 33$.}\label{table2}
\end{table}

\end{document}